\numberwithin{equation}{section}
\newtheorem{theorem}{Theorem}[section]
\newtheorem{lemma}[theorem]{Lemma}
\newtheorem{proposition}[theorem]{Proposition}
\newtheorem{corollary}[theorem]{Corollary}
\theoremstyle{definition}
\newcommand{\E}{{\mathbb E}}
\newcommand{\N}{{\mathbb N}}
\newcommand{\F}{{\mathbb F}}
\newcommand{\Z}{{\mathbb Z}}
\newcommand{\R}{{\mathbb R}}
\renewcommand{\S}{{\mathbb S}}
\newcommand{\eps}{\varepsilon}
\newcommand{\Hsnorm}[1]{\left\|#1\right\|_{\dot H^s}}
\newcommand{\Hsnormlarge}[1]{\biggl\|#1\biggr\|_{\dot H^s}}
\newcommand{\weakto}{\rightharpoonup}
\newcommand{\U}{U_{\varepsilon,z}}
\newcommand{\Uoz}{U_{1,0}}
\renewcommand{\u}{u_{\delta,\varepsilon,z}}
\newcommand{\uu}{u_{\bar{\varepsilon}^2,\bar{\varepsilon},z}}
\newcommand{\Ray}{\mathscr{R}}
\newcommand{\C}{\complement}
\newcommand{\dist}{\mathrm{dist}}
\newcommand{\p}{\varphi_{\delta,z}}
\newcommand{\wO}{\widetilde{\Omega}}
\newcommand{\B}{B_{3\delta}(L)}
\newcommand{\cal}{\mathcal}
\title[Nonlocal critical problems in contractible domains]{Nonlocal problems at critical \\ growth in contractible domains}
\author[S.\ Mosconi]{Sunra Mosconi}
\address[S.\ Mosconi]{Dipartimento di Informatica
\newline\indent
Universit\`a degli Studi di Verona
\newline\indent
C\'a Vignal 2, Strada Le Grazie 15, 37134 Verona, Italy}
\author[N.\ Shioji]{Naoki Shioji}
\address[N.\ Shioji]{Department of Mathematics
\newline\indent
Faculty of Engineering
\newline\indent
Yokohama National University
\newline\indent
Tokiwadai, Hodogaya-ku,
Yokohama 240-8501, Japan}
\email{shioji@ynu.ac.jp}
\author[M.\ Squassina]{Marco Squassina}
\address[M.\ Squassina]{Dipartimento di Informatica
\newline\indent
Universit\`a degli Studi di Verona
\newline\indent
C\'a Vignal 2, Strada Le Grazie 15, 37134 Verona, Italy}
\email{marco.squassina@univr.it}
\subjclass[2010]{34K37, 58K05}
\keywords{Fractional equation, critical embedding, contractible domains,
existence.}
\thanks{S. Mosconi and M. Squassina where partially supported by Gruppo Nazionale per l'Analisi Matematica, la Probabilit\`a e le loro Applicazioni (INdAM). N. Shioji is partially supported by the Grant-in-Aid for Scientific Research (C) (No. 26400160) from Japan Society for the Promotion of Science. Part of the paper was written during the XXV Italian Workshop on Calculus of Variations held in Levico, in February 2015.}
\begin{document}

\begin{abstract}
 We prove the existence of a positive solution for nonlocal problems
 involving the fractional Laplacian and a critical growth power nonlinearity
 when the equation is set in a suitable contractible domain.
\end{abstract}

\maketitle
\section{Introduction}
\subsection{Overview}
Let  $\Omega$ be a smooth bounded domain of $\R^N$ with $N\geq 3$. 
In the celebrated papers \cite{coron,bahricoron} A.\ Bahri and J.M.\ Coron showed the existence of solutions to the critical problem
\begin{equation}
\begin{cases}
\label{probcrit-s1}
-\Delta u=u^{\frac{N+2}{N-2}}, & \text{in $\Omega$,} \\
\,\, u>0 & \text{in $\Omega$},\\
\,\, u=0 & \text{on $\partial\Omega$,}
\end{cases}
\end{equation}
provided that
$H_m(\Omega,\Z_2)\not=\{0\}$ for some $m\in\N\setminus\{0\}$, where
$H_m(\Omega,\Z_2)$ denotes the homology of dimension $m$ of $\Omega$
with $\Z_2$-coefficients. Their result, in particular, always yields a solution to \eqref{probcrit-s1} in $\R^3$ provided that the 
domain $\Omega$ is {\em not} contractible, since
$H_1(\Omega,\Z_2)\not=\{0\}$ or $H_2(\Omega,\Z_2)\not=\{0\}$.
This is achieved via various sofisticated
arguments from algebraic topology. The results of \cite{coron,bahricoron} provide 
a sufficient but not necessary condition for the existence of solutions:
indeed, in \cite{dancer,ding,passaseo}, E.N.\ Dancer,  W.Y.\ Ding and D.\ Passaseo showed that problem \eqref{probcrit-s1}
admits nontrivial solutions also in suitable contractible domains.
Let  $N>2s$, $s\in (0,1)$ and consider the problem 
\begin{equation}
\begin{cases}
\label{probcrit}
(-\Delta)^s u=u^{\frac{N+2s}{N-2s}} & \text{in $\Omega$,} \\
\,\, u>0 & \text{in $\Omega$},\\
\,\, u=0 & \text{in $\R^N\setminus\Omega$},
\end{cases}
\end{equation}
involving the fractional Laplacian $(-\Delta)^s$.
Fractional Sobolev spaces are well known since the beginning of the last century, especially in the framework of harmonic analysis. On the other hand, recently, after the seminal paper of Caffarelli and Silvestre \cite{caffarelli},
a large amount of contributions appeared on problems
which involve the fractional diffusion $(-\Delta)^s$, $0<s<1$. Due to its nonlocal character, working on bounded
domains imposes to detect an appropriate variational formulation 
for the problem. We will consider functions 
on $\R^N$ with $u=0$ in $\R^N\setminus\Omega$ replacing the
usual boundary condition $u=0$ on $\partial\Omega$. More precisely, $\dot H^s(\R^N)$ denotes the space of functions $u\in L^{2N/(N-2s)}(\R^N)$ such that
$$
\int_{\mathbb{R}^{2N}}\frac{|u(x)-u(y)|^2}{|x-y|^{N+2s}}dxdy<\infty.
$$
It is known that $\dot H^s(\R^N)$ is continuously embedded into
$L^{2N/(N-2s)}(\R^N)$
and it is a Hilbert space, see e.g.\ \cite{valdserv-tams}.  For any $\Omega\subseteq \R^N$ we will set 
$$
X_\Omega=\Big\{u\in \dot H^s(\R^N): \text{$u=0$ in $\R^N\setminus\Omega$}\Big\},
$$ 
and say that $u\in X_\Omega$ weakly solves \eqref{probcrit} if
\begin{equation}
\int_{\mathbb{R}^N}  (-\Delta)^{s/2} u (-\Delta)^{s/2}\varphi\, dx
= \int_{\mathbb{R}^N} u^{\frac{N+2s}{N-2s}}\varphi\, dx,  
\,\,\,\quad \text{for all $\varphi\in X_\Omega$}.
\label{vfor-problema}
\end{equation}
It is natural to expect, as in the local case, that by assuming suitable
geometrical or topological conditions on $\Omega$ one can get solutions to \eqref{probcrit}.
To the best of our knowledge, the situation is the following:
\vskip2pt
\noindent
$\bullet$ if $\Omega$ is a star-shaped domain, 
then \eqref{probcrit} does {\em not} admit
solutions (see \cite{ros-oton});
\vskip2pt
\noindent
$\bullet$ if there is a point
$x_0\in \R^N$ and radii $R_2>R_1>0$ such that
\begin{equation}
\label{geom}
\{R_1\leq |x-x_0|\leq R_2\}\subset \Omega,\qquad
\{|x-x_0|\leq R_1\}\not\subset \overline\Omega,
\end{equation}
then \eqref{probcrit} admits
a solutions provided that $R_2/R_1$ is sufficiently large (see \cite{SSS}). \newline
Concerning nonexistence in star-shaped domains is still unknown if  
{\em sign-changing} solutions for the critical problem
can be ruled out as for the local case and this is connected with delicate unique
continuation results up to the boundary that are currently unavailable in this
framework. Concerning the existence of solutions under more general assumptions
than \eqref{geom}, like when $H_m(\Omega,\Z_2)\not=\{0\}$ for some $m\in\N\setminus\{0\}$, the result is expected but not available yet.

\subsection{Main result}
The goal of this paper is to provide a 
fractional counterpart of the results \cite{dancer,ding,passaseo} on the existence
of solutions in suitable contractible domains of $\R^N.$
More precisely, our main result is stated next, see Figure \ref{figball}. We will write $x=(x', x_N)\in \R^N$ for $x'\in \R^{N-1}$, $x_N\in \R$.
%
%
%

\begin{theorem}
\label{mainthm2}
Assume that $N\geq 3$ and $0<s<1$, or $N=2$ and $0<s\leq 1/2$ and let $0<R_0<R_1<R_2<R_3$.
Then problem \eqref{probcrit} admits a solution in any smooth domain $\Omega\subseteq B_{R_3}\setminus B_{R_0}$ satisfying
\begin{equation}
\label{thin-tunnel}
 \overline{\Omega}\cap \{(0, x_N): x_N\geq 0\}=\emptyset,
\end{equation}
\begin{equation}
\label{thin-tunnel2}
\Omega\supset \{R_1\leq |x|\leq R_2\}\setminus\{|x'|< \delta, x_N\geq 0\},
\end{equation}
for $\delta>0$ sufficiently small, depending only on $N, s$ and the $R_i$'s $i=0,\dots, 3$.
%
\end{theorem}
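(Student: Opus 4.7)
The plan is to adapt to the nonlocal setting the arguments of \cite{passaseo,dancer,ding}, combining concentrated-bubble test functions, a barycenter-type map, and a topological min-max construction. The natural variational framework uses the energy functional
\[
J(u)=\frac{1}{2}\int_{\R^N}|(-\Delta)^{s/2}u|^2\,dx-\frac{N-2s}{2N}\int_{\R^N}(u_+)^{2N/(N-2s)}\,dx
\]
on $X_\Omega$; its nonzero critical points are positive solutions of \eqref{probcrit} by the strong maximum principle for $(-\Delta)^s$. By the nonlocal concentration--compactness principle, Palais--Smale sequences at any level $c<c^{\ast}=(s/N)S^{N/(2s)}$, with $S$ the sharp fractional Sobolev constant realized by the Aubin--Talenti family $\U$, are relatively compact.

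The first main step is to construct a family of test functions $\u$ by cutting off $\U$ to a smooth function supported in $\Omega$, with concentration points $z$ ranging over a parameter set contained in the thick annular region $\{R_1\leq|x|\leq R_2\}\setminus\{|x'|<2\delta,\,x_N\geq 0\}$. Using the decay rate $\Uoz(x)\sim|x|^{-(N-2s)}$ together with a quantitative analysis of the long-range interactions created by the cutoff across the slit --- a nonlocal extension of the bubble estimates of \cite{SSS} --- one aims to prove
\[
\sup_{z\in Z}\,\max_{t>0}J(t\u)<c^{\ast}
\]
for all $\varepsilon$ small enough, uniformly in $z\in Z$.

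The second main step is the barycenter-type map
\[
\beta(u)=\frac{\int_{\R^N}(u_+)^{2N/(N-2s)}x\,dx}{\int_{\R^N}(u_+)^{2N/(N-2s)}\,dx},
\]
continuous on the Nehari sublevel $\mathcal{N}_\Omega\cap\{J\leq c^{\ast}-\eta\}$ and taking values in a small neighborhood of $\overline\Omega\subset B_{R_3}\setminus B_{R_0}$. On functions concentrated sharply at one point of $\Omega$, $\beta$ is a near-identity map; combined with the thin-tunnel hypothesis and the exclusion $\overline\Omega\cap\{(0,x_N):x_N\geq 0\}=\emptyset$, this forces $\beta$ to avoid both the inner ball $B_{R_0/2}$ and a neighborhood of the forbidden half-axis. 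Choosing $Z$ as a suitable parameter cell (or sphere relative to its boundary, to bypass the contractibility of $\Omega$, as in \cite{passaseo}), the test map $\phi_\varepsilon(z)=t(z)\u$, with $t(z)$ the Nehari projection, has the property that $\beta\circ\phi_\varepsilon$ represents a nontrivial class in the relevant homotopy group.

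With these ingredients the min-max conclusion runs as follows: taking $\Gamma$ the class of continuous deformations/extensions of $\phi_\varepsilon$ preserving the homotopy class of $\beta\circ\phi_\varepsilon$, the value
\[
c=\inf_{h\in\Gamma}\sup_{z\in Z}J(h(z))
\]
lies in $(0,c^{\ast})$: the upper bound comes from the first step and the lower bound from the topological obstruction, which prevents any continuous deformation of $\beta\circ\phi_\varepsilon$ into the region where $J$ is uniformly small. Standard deformation lemmas then produce a Palais--Smale sequence at level $c$, compact by the first paragraph, yielding a nontrivial positive critical point of $J$. The main obstacle I expect is the precise energy estimate of the first step, where the nonlocality of $(-\Delta)^s$ generates cutoff-induced tail contributions to the $\dot H^s$-norm that must be controlled uniformly as $\delta\to 0$; it is exactly at this point that the dimensional constraints $N\geq 3,\,0<s<1$ or $N=2,\,s\leq 1/2$ enter, ensuring that the bubble decay is just fast enough for the cutoff losses to be absorbed into the lower-order error term.
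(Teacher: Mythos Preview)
There is a genuine gap in your energy-level bookkeeping that makes the argument, as stated, collapse. You assert that the minimax value $c$ lies in $(0,c^{\ast})$ with $c^{\ast}=(s/N)S^{N/(2s)}=c_\infty$, and that Palais--Smale holds below $c^{\ast}$. But on the Nehari manifold $\mathcal{N}_\Omega$ one has $\inf J=c_\infty$, and this infimum is \emph{not attained} (the minimizers are the Talentians, which do not vanish outside a bounded set). Hence no continuous family in $\mathcal{N}_\Omega$ can have supremum energy strictly below $c_\infty$: your cut-off bubbles satisfy $\max_t J(t\u)\geq c_\infty$, with equality only in the limit. The estimate $\sup_z\max_t J(t\u)<c^{\ast}$ is therefore impossible, and the whole sub-$c^{\ast}$ compactness you invoke is useless here.

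The paper fixes the window one level up: the minimax value is shown to lie in $]c_\infty,2c_\infty[$. The upper bound $c_1\leq c_\infty+\bar\eps<2c_\infty$ comes from the cut-off estimates (this is what your ``first main step'' actually proves, once stated correctly); the strict lower bound $c_1>c_\infty$ uses the barycenter argument and condition \eqref{thin-tunnel}. The key compactness input is then that $(PS)_c$ holds for $c\in\,]c_\infty,2c_\infty[$. This is \emph{not} a simple concentration--compactness statement: one needs the global profile decomposition of Palatucci--Pisante \cite{pal-pis2}, together with (i) a nonexistence result for nonnegative solutions of \eqref{probcrit} in the half-space (Corollary \ref{nosol}, which in turn requires a regularity lemma), and (ii) the energy-doubling bound $I_\Omega(u)\geq 2c_\infty$ for sign-changing profiles. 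None of this appears in your proposal. Finally, the dimensional restriction $N\geq 3$ or $N=2,\ s\leq 1/2$ is not about bubble decay: it is the condition $N-2s\geq 1$ under which the segment $\{(0,x_N):x_N\geq 0\}$ has zero $B_{s,2}$-capacity, so that the cut-off near the missing axis costs arbitrarily little $\dot H^s$-energy.
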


\begin{figure}
\centering
\begin{tikzpicture}[y=0.80pt, x=0.8pt,yscale=-1, inner sep=0pt, outer sep=0pt, scale=0.5]

\draw[color=gray, line width=5pt] (330, 770) circle[radius=120];
\fill[fill=white, draw opacity=1] (310, 640) rectangle (350, 660);
\draw[->] (330, 770) --  (375, 875);
\draw  (350, 860) node{$R_1$};
\draw[->] (330, 770) --  (400, 874);
\draw (400, 832) node{$R_2$};
\draw[->] (330,950) -- (330,600) node[above right]{$x_N$};
\draw[very thick] (330, 770) -- (330, 603);
\draw[->] (100, 770) -- (600, 770) node[below right]{$x'$};

  \path[draw=black, line join=miter,line cap=butt,even odd rule,line width=0.500pt]
    (372.0472,680.3150) .. controls (318.8976,644.8819) and (336.6142,627.1654) ..
    (372.0472,627.1654) .. controls (407.4803,627.1654) and (460.6299,644.8819) ..
    (478.3465,662.5984) .. controls (506.3588,690.6107) and (525.9503,734.6970) ..
    (525.9503,770.1300) .. controls (525.9503,805.5631) and (506.2319,891.0648) ..
    (453.0823,908.7813) .. controls (399.9327,926.4979) and (354.3307,928.3465) ..
    (318.8976,928.3465) .. controls (283.4646,928.3465) and (230.3150,910.6299) ..
    (212.5984,857.4803) .. controls (194.8819,804.3307) and (185.9454,773.0546) ..
    (185.9454,719.9050) .. controls (185.9454,666.7554) and (246.1829,629.1672) ..
    (263.8994,629.1672) .. controls (281.6160,629.1672) and (318.8976,627.1654) ..
    (318.8976,644.8819) .. controls (318.8976,662.5984) and (312.8889,668.1442) ..
    (297.6371,684.6284) .. controls (285.6052,697.6325) and (282.0757,717.9031) ..
    (282.0757,753.3362) .. controls (282.0757,788.7693) and (289.9330,841.1494) ..
    (325.3661,841.1494) .. controls (360.7992,841.1494) and (371.6799,821.6852) ..
    (378.3657,815.2691) .. controls (389.7638,804.3307) and (395.4163,789.4941) ..
    (409.0224,755.8010) .. controls (425.1968,715.7481) and (407.4803,698.0315) ..
    (372.0472,680.3150) -- cycle;

\draw (260, 800) node{$\Omega$};

\end{tikzpicture}

\caption{$\Omega$ contains a spherical shell minus a small cylindrical neighbourhood of its north pole, and must be distant from the positive $x_N$ axis.}
\label{figball}
\end{figure}
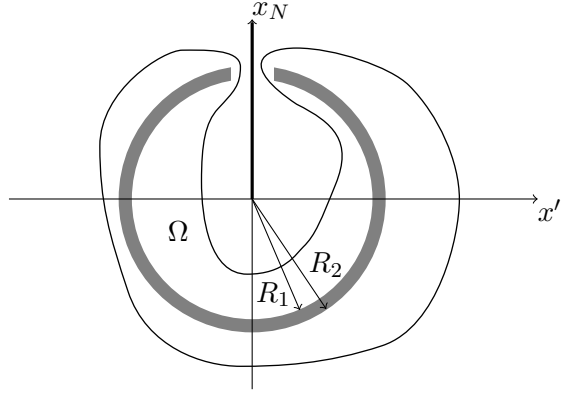

\noindent
We briefly describe the idea of the proof. We first consider solutions of problem \eqref{probcrit} as critical points of the free energy $I_\Omega:X_\Omega\to \R$
\[
I_\Omega(u)=\frac 1 2\int_{\R^N}|(-\Delta)^{\frac s 2} u|^2\, dx-\int_{\R^N} |u|^{2^*}\, dx, \qquad 2^*=\frac{2N}{N-2s},
\]
where we denote $I_{\R^N}=I$ for brevity, on the Nehari manifold 
\[
{\cal N}_+(\Omega)=\{u\in X_\Omega: u\geq 0,\ I'(u)(u)=0\}.
\]
We look at critical points near the {\em minimal energy} 
\[
\inf_{{\cal N}_+(\Omega)} I_\Omega=\inf_{{\cal N}_+(\R^N)} I=:c_\infty>0,
\]
and proceed by contradiction, assuming there is no critical point for $I_\Omega$ in $]c_\infty, 2c_\infty[$. Through a regularity lemma and known results we rule out the existence of nonnegative, nontrivial weak solutions to \eqref{probcrit} in the half-space. Then we can apply the characterization of Palais-Smale sequences proved in \cite{pal-pis2} to get that the $(PS)_c$ condition holds for all $c\in \ ]c_\infty, 2c_\infty[$. The contradiction will arise through a deformation argument near a minimax level, constructed as follows.
\vskip3pt
\noindent
In the whole $\R^N$, the solutions of problem \eqref{probcrit} are of the form
\begin{equation}
\label{Talentiane}
\U (x)=d_{N, s}\left(\frac{\eps}{\eps^2+|x-z|^2}\right)^{\frac{N-2s}{2}}
\end{equation}
for arbitrary $\eps>0$, $z\in \R^N$, and these solutions minimize $I$
on the Nehari manifold ${\cal N}_+(\R^N)$. Notice that, for $\eps\to 0$, most of the energy of $\U$ concentrates arbitrarily near $z$. Letting $R=(R_1+R_2)/2$, this enables us to cut-off $\U$ near each $z\in {\mathbb S}_R^{N-1}=\{|z|=R\}$ while keeping the energy almost minimal for sufficiently small $\eps\gg\delta>0$. Projecting onto ${\cal N}_+(\Omega)$ we thus obtain for any $z\in {\mathbb S}_R^{N-1}$ a function $v_z\in {\cal N}_+(\Omega)$ whose {\em barycenter} 
\[
 \beta (v_z)=\int_{\R^N} x\, v_z^{2^*}dx/\int_{\R^N} v_z^{2^*}dx
\]
well defines a map 
\[
{\mathbb S}^{N-1}_R\ni z\mapsto R\frac{\beta(v_z)}{|\beta(v_z)|}\in {\mathbb S}^{N-1}_R.
\]
Since $v_z$ is obtained cutting off $\U$ near $z$ for very small $\eps$, its barycenter is near $z$ and the resulting map is near the identity, thus has Brouwer degree $1$. 
Therefore the minimax problem
\[
c=\inf_{\varphi \in \Gamma}\sup_{z\in {\mathbb S}^{N-1}_R} I_\Omega(\varphi(z)),\qquad \Gamma=\Big\{\varphi\in C^0( \mathbb{S}_R^{N-1},{\cal N}_+(\Omega)): {\rm deg}\big(R\frac{\beta(\varphi(\cdot))}{|\beta(\varphi(\cdot))|}, {\mathbb S}^{N-1}_R\big)\neq 0\Big\}
\]
is well defined, and its minimax value is almost minimal, in particular less than $2c_\infty$. It can be proven that $c$ is also strictly greater than $c_\infty$, due to the fact that the whole half-line $\{(0, x_N): x_N\geq 0\}$ is a positive distance apart from $\overline\Omega$. Therefore $c\in \ ]c_\infty, 2c_\infty[$ (where the $PS$ condition holds), and through classical variational methods we find that $c$ is a critical value, reaching the contradiction.

\vskip3pt
\noindent
The most delicate part of the argument is the construction of the cut-offs of $\U$ with almost minimal energy, and this is where the condition $s\in ]0,1/2]$ when $N=2$ arises. While this seems a technical limitation at first, it really depends on the fact that the Bessel capacity $B_{s,2}$ of segments  vanishes only when $N-2s\geq 1$ ($1$ being the Hausdorff dimension of segments). For $N\geq 3$, $s\in \ ]0,1[$ the capacity of a segment $L$ vanishes, thus any function can be cutted-off near $L$ paying an arbitrary small amount of energy in the process. This is indeed what has to be done to $\U$ near the segment $L=\{(0, x_N): x_N\geq 0\}$ missing from $\Omega$, at least when $z\in {\mathbb S}^{N-1}_R$ is near $L$ (e.g.\ $z=(0,R)\notin\Omega$). In the case $N=2$, which arises only in the non-local case, the "cutting-off almost preserving the energy" procedure for such $z$'s fails for $s\in \ ]1/2, 1[$, having $L$ locally  nonzero capacity.

\noindent

\subsection{Plan of the paper}
In Section \ref{Prelim} we collect various preliminary results.
In Section~\ref{Estimates} we derive careful estimates on the energy
of suitable truncations of the Talenti functions \eqref{Talentiane}. Finally, in Section~\ref{Existence}
we implement the topological argument using the results of Section~\ref{Estimates}.

\section{Preliminaries}
\label{Prelim}

Let for any $u\in X_\Omega=\{u\in \dot H^s(\R^N): u\equiv 0 \text{ on $\C\Omega$}\}$
\begin{equation}
\label{Ftransform}
[u]_s^2=\frac{C(N, s)}{2}\int_{\R^{2N}}\frac{(u(x)-u(y))^2}{|x-y|^{N+2s}}\, dx\, dy=\int_{\R^N}|\xi|^{2s}|{\mathcal F}u(\xi)|^2\, d\xi,
\end{equation}
where ${\cal F}$ is the Fourier transform and  
\[
C(N, s)=\left(\int_{\R^N}\frac{1-\cos (\xi_1)}{|\xi|^{N+2s}}\, d\xi\right)^{-1}.
\]
Clearly $[\ ]_s$ is a Hilbert norm on $X_\Omega$ with associated scalar product
\[
(u, v)_s:=\frac{C(N, s)}{2}\int_{\R^{2N}}\frac{(u(x)-u(y))(v(x)-v(y))}{|x-y|^{N+2s}}\, dx\, dy,\qquad u, v\in \dot H^s(\R^N).
\]
The $s$-fractional Laplacian of $u\in\dot H^s(\R^N)$ is the gradient of the functional
\[
\dot H^s(\R^N)\ni u\mapsto \frac{1}{2}[u]^2_s
\]
and can be identified, for $u\in C^\infty(\R^N)\cap \dot{H}^s(\R^N)$ with the function
\[
(-\Delta)^s u(x)=C(N,s){\rm PV}\int_{\mathbb{R}^N} \frac{u(x)-u(y)}{|x-y|^{N+2s}}\,dy=C(N, s)\lim_{\eps\to 0}\int_{\C B_\eps(x)}\frac{u(x)-u(y)}{|x-y|^{N+2s}}\,dy.
\]
In this regard, notice that  due to \cite[Proposition 2.12]{IMS2} (see also \cite[Definition 2.4]{IMS2})
\[
\int_{\C B_\eps(\cdot)}\frac{u(\cdot)-u(y)}{|\cdot-y|^{N+2s}}\,dy\longrightarrow (-\Delta)^s u \qquad \text{strongly in $L^1_{\rm loc}(\R^N)$ as $\eps\to 0$}.
\]

We recall now the following

\begin{proposition}[Hardy-Littlewood-Sobolev inequality]
Let $0<\lambda<N$, and $p>1$, $q>1$ satisfy
\[
\frac{1}{q}+\frac{1}{p}=1+\frac{\lambda}{N}.
\]
Then for any $u\in L^q(\R^N)$, $v\in L^p(\R^N)$ it holds
\[
\int_{\R^{2N}}\frac{|u(x)v(y)|}{|x-y|^{N-\lambda}}\, dx\, dy\leq C\|u\|_q\|v\|_p
\]
for some constant $C=C(N, s, p)$.
\end{proposition}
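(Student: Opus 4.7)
The plan is to deduce the bilinear inequality from the strong-type $(p,r)$ mapping property of the Riesz potential and to establish the latter via Hedberg's pointwise bound. Set $r=q'$, so that the scaling relation $1/p+1/q=1+\lambda/N$ becomes $1/r=1/p-\lambda/N$, with $1<p,r<\infty$; moreover, the hypothesis $q>1$ gives $1/p>\lambda/N$, i.e.\ $\lambda p<N$, which will be essential below. By Fubini and H\"older,
\[
\int_{\R^{2N}}\frac{|u(x)v(y)|}{|x-y|^{N-\lambda}}\,dx\,dy=\int_{\R^N}|u(x)|\,I_\lambda|v|(x)\,dx\leq \|u\|_q\,\|I_\lambda |v|\|_r,
\]
where $I_\lambda w(x):=\int_{\R^N}w(y)|x-y|^{\lambda-N}\,dy$ denotes the Riesz potential. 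It thus suffices to show $\|I_\lambda v\|_r\leq C\|v\|_p$ for $v\geq 0$.

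To this end, I would apply a standard truncation trick: for every $x\in\R^N$ and every $R>0$, split
\[
I_\lambda v(x)=\int_{B_R(x)}\frac{v(y)}{|x-y|^{N-\lambda}}\,dy+\int_{\R^N\setminus B_R(x)}\frac{v(y)}{|x-y|^{N-\lambda}}\,dy.
\]
A dyadic decomposition of the ball into the annuli $B_{R/2^k}(x)\setminus B_{R/2^{k+1}}(x)$ together with the definition of the Hardy-Littlewood maximal function $Mv$ bounds the first term by $CR^\lambda Mv(x)$, the geometric series converging since $\lambda>0$. H\"older's inequality bounds the second term by $\|v\|_p\bigl(\int_{|z|>R}|z|^{(\lambda-N)p'}\,dz\bigr)^{1/p'}=CR^{\lambda-N/p}\|v\|_p$, where convergence of the tail integral uses exactly $\lambda p<N$.

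Balancing the two bounds with the choice $R=(\|v\|_p/Mv(x))^{p/N}$ yields the Hedberg-type pointwise estimate
\[
I_\lambda v(x)\leq C\,(Mv(x))^{1-\lambda p/N}\,\|v\|_p^{\lambda p/N}.
\]
Raising to the $r$-th power and using the identities $r(1-\lambda p/N)=p$ and $r\lambda p/N=r-p$, one gets
\[
\|I_\lambda v\|_r^r\leq C\,\|v\|_p^{r-p}\int_{\R^N}(Mv)^p\,dx\leq C\,\|v\|_p^r,
\]
the last step being the strong-type $(p,p)$ bound for the maximal operator, valid because $p>1$.

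The only delicate point in the scheme is the exact cancellation of exponents that turns the Hedberg pointwise bound into an $L^r$ estimate via the $L^p$-boundedness of $M$. This cancellation works precisely because both $p>1$ and $q>1$, which rules out the endpoint cases of the Riesz-potential scale; no further obstacle is expected.
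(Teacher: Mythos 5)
Your argument is correct. Note, however, that the paper does not prove this Proposition at all: it is stated as a recalled classical fact (the non-sharp Hardy--Littlewood--Sobolev inequality), so there is no in-paper proof to compare against. What you have supplied is the standard real-variable proof via Hedberg's pointwise inequality, and all the steps check out: the identification $r=q'$ with $1/r=1/p-\lambda/N$ and the equivalence of $q>1$ with $\lambda p<N$ are right; the dyadic estimate of the local part by $CR^{\lambda}Mv(x)$ uses only $\lambda>0$; the tail estimate $CR^{\lambda-N/p}\|v\|_p$ uses exactly $(N-\lambda)p'>N$, i.e.\ $\lambda p<N$; and the exponent identities $r(1-\lambda p/N)=p$ and $r\lambda p/N=r-p$ follow from $r=Np/(N-\lambda p)$, so the maximal theorem (valid since $p>1$) closes the argument. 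The only cosmetic caveat is that the optimization in $R$ should be read with the convention that the pointwise bound is trivial when $Mv(x)$ is $0$ or $+\infty$. One substantive remark on scope: this method yields the inequality with an unspecified constant $C(N,\lambda,p)$, which is precisely what the Proposition asserts and what is used in the estimates of Section 3; it does \emph{not} yield the sharp constant, which the paper invokes separately (via the rearrangement/symmetrization results behind the sharp fractional Sobolev inequality of Cotsiolis--Tavoularis) to identify $S(N,s)$ and the extremals $U_{\eps,z}$. Your proof is therefore adequate for the statement as written, but could not replace the sharp form needed elsewhere in the paper.
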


Using the sharp form of the Hardy-Littlewood-Sobolev inequality, 
in \cite{costav} it is proved that the fractional Sobolev inequality, for $s<\frac{N}{2}$,
\[
S(N, s)\|u\|_{2^*}^2\leq [u]_s^2, \qquad \forall u\in \dot{H}^s(\R^N), \qquad 2^*=\frac{2N}{N-2s}
\]
holds with sharp constant
\[
S(N, s)=2^{2s}\pi^{s}\frac{\Gamma\big(\frac{N+2s}{2}\big)}{\Gamma\big(\frac{N-2s}{2}\big)}\left(\frac{\Gamma\big({\frac{N}{2}}\big)}{\Gamma(N)}\right)^{\frac{2s}{N}}
\]
and equality holds if and only if
\[
u(x)=c\left(\frac{\eps}{\eps^2+|x-z|^2}\right)^{\frac{N-2s}{2}},\quad \text{for any $c\in \R$, $\eps>0$ and $z\in \R^N$}.
\]

\subsection{Nehari manifold}
We consider the functional 
\[
X_\Omega\ni u\mapsto I_\Omega(u)=\frac{1}{2}[u]_s^2-\frac{1}{2^*}\int_{\R^N} |u|^{2^*}\, dx, \qquad I_{\R^N}=I.
\]
Its critical points are the only solutions of 
\begin{equation}
\label{problem}
\begin{cases}
(-\Delta)^s u=|u|^{\frac{4s}{N-2s}}u&\text{in $\Omega$},\\
u=0&\text{in $\C\Omega$}
\end{cases}
\end{equation}
and the nontrivial ones belong to the associated Nehari manifold
\[
{\cal N}(\Omega):=\{u\in X_\Omega\setminus\{0\}: [u]_s^2=\|u\|_{2^*}^{2^*}\}.
\]
Given $u\in X_\Omega\setminus\{0\}$ there is exactly one $\lambda>0$ such that $\lambda u\in {\cal N}(\Omega)$, which defines the projection ${\cal T}:X_\Omega\setminus\{0\}\to {\cal N}(\Omega)$ as 
\[
{\cal T}(u)=\left(\frac{[u]_s^2}{\|u\|_{2^*}^{2^*}}\right)^{\frac{1}{2^*-2}}u.
\]
From the $1$-Lipschitzianity of the modulus we infer that $[|u|]_s\leq [u]_s$. Notice, however, that due to the nonlocality of the norm, for any (properly)  sign-changing $u\in {\cal N}(\Omega)$, it holds $|u|\notin {\cal N}(\Omega)$. However, a straightforward calculation shows that
\begin{equation}
\label{proptau}
I_\Omega\big({\cal T}(|u|)\big)\leq I_\Omega(u),\qquad \forall u\in {\cal N}(\Omega).
\end{equation}
The problem 
\begin{equation}
\label{problempositive}
\begin{cases}
(-\Delta)^s u=u^{\frac{N+2s}{N-2s}}&\text{in $\Omega$},\\
u=0&\text{in $\C\Omega$},\\
u\geq 0,\quad  u\neq 0,
\end{cases}
\end{equation}
 is equivalent to find critical points of $I_\Omega$ belonging to 
\[
{\cal N}_+(\Omega):=\{u\in X_\Omega: u\geq 0\}\cap {\cal N}(\Omega).
\]
When $\Omega=\R^N$, by \cite{classif}, the nonnegative, nontrivial critical points of $I$ on $\dot H^s(\R^N)$ are  exactly the functions
\begin{equation}
\label{Talentians}
U_{\eps, z}(x)=d_{N, s}\left(\frac{\eps}{\eps^2+|x-z|^2}\right)^{\frac{N-2s}{2}},
\end{equation}
for a suitable $d_{N, s}>0$. Let  
\begin{equation}
\label{defcinfty}
c_\infty:=\inf\big\{I(u):u\in {\cal N}_+(\R^N)\big\}.
\end{equation}
Using the fact that $[|u|]_s\leq [u]_s$ and considering ${\cal T}(|u|)$ for any $u\in {\cal N}(\R^N)$ we get
\begin{equation}
\label{defcinfty2}
c_\infty=\inf\big\{I(u):u\in {\cal N}(\R^N)\big\},
\end{equation}
and moreover it holds
\begin{equation}
\label{cinfty}
\begin{split}
c_\infty
&=\big(\frac{1}{2}-\frac{1}{2^*}\big)\inf\big\{[{\cal T}(v)]_s^2:v\geq 0, v\neq 0\big\}\\
&=\frac{s}{N}\Big(\inf\Big\{\frac{[v]_s^2}{\|v\|_{2^*}^2}: v\geq 0, v\neq 0\Big\}\Big)^{\frac{N}{2s}}\\
&=\frac{s}{N}S(N, s)^{\frac{N}{2s}}.
\end{split}
\end{equation}
Moreover, $u$ is a minimizer for \eqref{defcinfty} if and only if $u={\cal T}(v)$ for some minimizer of the problem
\[
\inf\big\{\frac{[v]_s^2}{\|v\|_{2^*}^2}: v\geq 0, v\neq 0\big\},
\]
therefore $u=U_{\eps, z}$ for some $\eps>0$ and $z\in \R^N$. 
Note that this implies
\begin{equation}
\label{energyT}
[U_{\eps, z}]_s^2=\|U_{\eps, z}\|_{2^*}^{2^*}=\frac{N}{s}c_\infty.
\end{equation}

We recall now some basic facts about the Nehari manifold setting we will work in. 

\begin{proposition}\label{Nehariproperties}  
The following facts holds.
\begin{enumerate}
\item
${\cal N}(\Omega)$ is a $C^2$ Hilbert manifold bounded away from $0$.
\item
For any $u_0\in {\cal N}(\Omega)$, $\nabla I_\Omega(u_0)=0$ if and only if $\nabla_{\cal N}I_\Omega(u_0)=0$ where $\nabla_{\cal N}I_\Omega(u_0)$ is the projection onto $T{\cal N}(u_0)$ of $\nabla I_\Omega(u_0)$ and $T{\cal N}(u_0)$ is the tangent space to ${\cal N}(\Omega)$ at $u_0$. In other words $u_0\in {\cal N}(\Omega)$ is a critical point of $I_\Omega: X_\Omega\to \R$ if and only if it is critical for $I_\Omega:{\cal N}(\Omega)\to \R$ as a functional on the Hilbert manifold ${\cal N}(\Omega)$.
\item
Given a bounded sequence $\{u_n\}\subseteq {\cal N}(\Omega)$, $\|\nabla_{\cal N}I_\Omega(u_n)\|\to 0$ if and only if $\|\nabla I_\Omega(u_n)\|\to 0$.
\end{enumerate}
\end{proposition}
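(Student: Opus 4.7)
The whole statement reduces to a standard Lagrange multiplier analysis on the level set of a single functional. Namely, I would write $\cal N(\Omega)=G^{-1}(0)$ with the $C^2$ map $G:X_\Omega\setminus\{0\}\to \R$ defined by $G(u)=[u]_s^2-\|u\|_{2^*}^{2^*}$. The key computation driving everything is
\[
G'(u)[u]=2[u]_s^2-2^*\|u\|_{2^*}^{2^*}=(2-2^*)[u]_s^2\qquad\text{for every }u\in\cal N(\Omega),
\]
which is strictly negative since $2^*>2$. Hence $G'(u)\neq 0$ on $\cal N(\Omega)$, so $0$ is a regular value of $G$ and the implicit function theorem makes $\cal N(\Omega)$ a $C^2$ Hilbert submanifold of $X_\Omega$, proving the manifold claim in (1). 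The fact that $\cal N(\Omega)$ is bounded away from $0$ is a one-line consequence of the Sobolev inequality applied to the Nehari identity: on $\cal N(\Omega)$,
\[
[u]_s^2=\|u\|_{2^*}^{2^*}\leq S(N,s)^{-2^*/2}[u]_s^{2^*},
\]
so $[u]_s\geq S(N,s)^{N/(4s)}$ uniformly.

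For part (2), only the implication $\nabla_\cal N I_\Omega(u_0)=0\Rightarrow \nabla I_\Omega(u_0)=0$ needs proof. Since $T\cal N(u_0)=\ker G'(u_0)$ has codimension one, its orthogonal complement in $X_\Omega$ is the line spanned by the Riesz representative $\nabla G(u_0)$, and the hypothesis reads $\nabla I_\Omega(u_0)=\lambda\,\nabla G(u_0)$ for some $\lambda\in\R$. Testing both sides against $u_0\in \cal N(\Omega)$, the left-hand side equals $I_\Omega'(u_0)[u_0]=[u_0]_s^2-\|u_0\|_{2^*}^{2^*}=0$, while the right-hand side equals $\lambda(2-2^*)[u_0]_s^2$, so $\lambda=0$ and $\nabla I_\Omega(u_0)=0$.

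Part (3) uses the same orthogonal decomposition $\nabla I_\Omega(u_n)=\nabla_\cal N I_\Omega(u_n)+\mu_n\,\nabla G(u_n)$, where the direction $\|\nabla I_\Omega(u_n)\|\to 0\Rightarrow \|\nabla_\cal N I_\Omega(u_n)\|\to 0$ is immediate since projections decrease norms. For the converse, testing against $u_n$ and using $I_\Omega'(u_n)[u_n]=0$ yields
\[
\mu_n(2-2^*)[u_n]_s^2=-\nabla_\cal N I_\Omega(u_n)[u_n].
\]
Since $\{u_n\}$ is bounded in $X_\Omega$ by assumption and bounded below in $X_\Omega$-norm by part (1), while $\|\nabla G(u_n)\|\leq 2[u_n]_s+2^*\|u_n\|_{2^*}^{2^*-1}$ stays uniformly bounded on bounded subsets of $\cal N(\Omega)$, I obtain $|\mu_n|\leq C\,\|\nabla_\cal N I_\Omega(u_n)\|$, and the decomposition then gives $\|\nabla I_\Omega(u_n)\|\leq C'\,\|\nabla_\cal N I_\Omega(u_n)\|$. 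The only genuine obstacle is this quantitative nondegeneracy of the constraint, which reduces entirely to the strict sign of $G'(u)[u]=(2-2^*)[u]_s^2$ on $\cal N(\Omega)$ — itself a manifestation of the superquadratic character $2^*>2$.
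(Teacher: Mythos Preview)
Your proof is correct and follows essentially the same approach as the paper: both write $\cal N(\Omega)$ as the zero set of $G(u)=[u]_s^2-\|u\|_{2^*}^{2^*}$ (the paper calls it $N$), use the key identity $G'(u)[u]=(2-2^*)[u]_s^2$ to establish the manifold structure and nondegeneracy, and then run the same Lagrange-multiplier argument—testing the orthogonal decomposition $\nabla I_\Omega=\nabla_{\cal N}I_\Omega+\lambda\nabla G$ against $u$ (resp.\ $u_n$)—to prove (2) and (3). The only differences are cosmetic (notation, the explicit Sobolev constant in the lower bound, and your explicit mention of the easy direction in (3)).
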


\begin{proof}\ 
\vskip2pt
\noindent
{\em (1)}\ First observe that \eqref{energyT}, and the Sobolev inequality $\|u\|_{2^*}\leq C[u]_s$ imply that ${\cal N}(\Omega)$ is bounded away from zero, since 
\[
[u]_s^2=\|u\|_{2^*}^{2^*}\leq C^{2^*}[u]_s^{2^*} \quad \Rightarrow\quad [u]_s\geq C^{-\frac{2^*}{2^*-2}}.
\]
To prove that ${\cal N}(\Omega)$ is a Hilbert manifold write ${\cal N}(\Omega)=\{u\in X_\Omega\setminus\{0\}: N(u)=0\}$ where 
\[
N(u)=(\nabla I_\Omega(u), u)_s=[u]_s^2-\|u\|_{2^*}^{2^*}.
\]
Clearly $N\in C^2(X_\Omega)$ and for any $u\in {\cal N}(\Omega)$ it holds
\[
-(\nabla N(u), \frac{u}{[u]_s})_s=\frac{1}{[u]_s}(2^*\|u\|_{2^*}^{2^*}-2[u]_s^2)=(2^*-2)[u]_s\geq \eps>0
\]
being ${\cal N}(\Omega)$ bounded away from $0$. Therefore $\nabla N(u)\neq 0$ at any point $u\in {\cal N}(\Omega)$ which, through the implicit function theorem, completes the proof of the first assertion.
\vskip2pt
\noindent
{\em (2)}\  One implication is trivial, and we will prove the opposite one. Suppose $u_0\in {\cal N}(\Omega)$ is such that $\nabla_{\cal N}I_\Omega(u_0)=0$. By Riesz duality we will consider $\nabla_{\cal N} I_\Omega(u_0)$ as a vector belonging to 
\[
T{\cal N}(u_0)=(\nabla N(u_0))^\bot\subset X_\Omega,
\]
with the norm induced by $X_\Omega$.
We have that 
\[
 \nabla I_\Omega(u_0)=\nabla_{\cal N}I_\Omega(u_0)+\lambda \nabla N(u_0)
\]
for some $\lambda\in \R$, and taking the scalar product with $u_0$ we obtain, similarly as before,
\[
0=\lambda(\nabla N(u_0), u_0)_s=\lambda(2-2^*)[u_0]_s^2,
\]
which forces $\lambda=0$ and the claim.
\vskip2pt
\noindent
{\em (3)}\ The proof is analogous to the previous one. Since
\[
\nabla I_\Omega(u_n)=\nabla_{\cal N}I_\Omega(u_n)+\lambda_n\nabla N(u_n)
\]
and $\{\nabla N(u_n)\}$ is bounded being $\{u_n\}$ bounded, it suffices to show that $\lambda_n\to 0$. Taking the scalar product with $u_n$ we get
\[
|\lambda_n||(\nabla N(u_n), u_n)_s|\leq [\nabla_{\cal N} I_\Omega(u_n)]_s[u_n]_s
\]
and thus, being ${\cal N}(\Omega)$ bounded away from $0$ we obtain
\[
\eps |\lambda_n|\leq |\lambda_n|(2^*-2)[u_n]_s^2=|\lambda_n||(\nabla N(u_n), u_n)_s|\leq C[\nabla_{\cal N} I_\Omega(u_n)]_s\to 0.
\]
This concludes the proof.
\end{proof}

Recall that, given a topological space $A$, a subspace $B\subseteq A$ is called a {\em strong deformation retract} of $A$ if there exists ${\cal R}\in C^0([0,1]\times A, A)$ (a {\em homotopy retraction of $A$ on $B$}) such that  
\begin{enumerate}
\item
${\cal R}(0, x)= x$ for all $x\in A$,
\item
${\cal R}(t, x)=x$ for all $x\in B$, $t\in [0,1]$,
\item
${\cal R}(1, x)\in B$ for all $x\in A$.
\end{enumerate}
Given $J:M\to \R$, where $M$ is a $C^2$-Hilbert manifold, we denote by $C_{J}:=\{J(u): J'(u)=0, \ u\in M\}$ the set of critical levels. From \cite[Lemma 3.2]{Chang}, we get that the following deformation lemma holds true. 

\begin{proposition}\label{deformation}
Let $M$ be a $C^2$-Hilbert manifold and suppose $J\in C^{2}(M, \R)$ satisfies $(PS)_c$ for any $c\in [a, b]$. If $C_{J}\cap [a, b]=\emptyset$, then $\{u\in M:J(u)\leq a\}$ is a strong deformation retract of $\{u\in M: J(u)\leq b\}$.
\end{proposition}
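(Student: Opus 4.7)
The plan is to carry out the classical negative gradient flow deformation adapted to a $C^2$--Hilbert manifold. Three ingredients are needed: a uniform lower bound on $\|\nabla_M J\|$ in a strip around $J^{-1}([a,b])$; a cut-off tangent vector field whose flow is globally defined and decreases $J$ at unit rate where it acts; and a continuous hitting-time function that produces the retraction.

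First I would establish the uniform gradient bound: there exist $\mu>0$ and $\delta>0$ such that $\|\nabla_M J(u)\|_{T_uM}\geq \delta$ whenever $J(u)\in[a-\mu,b+\mu]$. If this failed, a sequence $\{u_n\}\subset M$ with $J(u_n)\in[a,b]$ and $\|\nabla_M J(u_n)\|\to 0$ would, up to subsequence, satisfy $J(u_n)\to c\in[a,b]$. Then $\{u_n\}$ would be a $(PS)_c$ sequence and, by hypothesis, admit a convergent subsequence whose limit $u^{\ast}\in M$ would be a critical point with $J(u^{\ast})=c\in[a,b]$, contradicting $C_J\cap[a,b]=\emptyset$. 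The same argument, applied a posteriori to slightly larger intervals, justifies widening the strip from $[a,b]$ to $[a-\mu,b+\mu]$.

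Next I would pick a locally Lipschitz cut-off $\chi:M\to[0,1]$ with $\chi\equiv 1$ on $J^{-1}([a,b])$ and $\chi\equiv 0$ outside $J^{-1}([a-\mu,b+\mu])$, and form the locally Lipschitz tangent field
\[
V(u)=-\chi(u)\,\frac{\nabla_M J(u)}{\|\nabla_M J(u)\|^2_{T_uM}}
\]
on the regular set, extended by $0$ elsewhere. Because $\|V(u)\|\leq 1/\delta$ on its support and $\frac{d}{dt}J(\eta(t,u))=-\chi(\eta(t,u))$ is nonpositive, trajectories stay in the strip, and the Cauchy problem $\dot\eta(t,u)=V(\eta(t,u))$, $\eta(0,u)=u$, generates a continuous global flow $\eta:[0,\infty)\times M\to M$. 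For $u$ with $J(u)\leq b$ define the hitting time $\tau(u)=\inf\{t\geq 0: J(\eta(t,u))\leq a\}$, with $\tau(u)=0$ on $\{J\leq a\}$. The unit-rate decrease inside $J^{-1}([a,b])$ gives $0\leq\tau(u)\leq b-a$ and continuity of $\tau$, and the desired retraction is
\[
{\cal R}(t,u)=\eta\bigl(t\,\tau(u),u\bigr),\qquad (t,u)\in[0,1]\times\{J\leq b\},
\]
which clearly satisfies the three properties in the definition of strong deformation retract.

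The main obstacle is the gradient-flow construction on a general Hilbert manifold: the gradient is only a \emph{section} of $TM$, not a vector field on a linear space, so the ODE must be solved either in local charts or through the Riemannian exponential map on $M$ inherited from the Hilbert inner product. This is handled by the standard partition-of-unity construction of a locally Lipschitz pseudo-gradient; once that is in place, the a priori bound $\|V\|\leq 1/\delta$, the trapping of trajectories in the bounded-energy band $J^{-1}([a-\mu,b+\mu])$, and the $(PS)$ condition together rule out finite-time blow-up and yield global existence of the flow. This is the content of \cite[Lemma 3.2]{Chang}.
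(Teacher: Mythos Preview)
The paper does not actually prove this proposition: it simply invokes \cite[Lemma 3.2]{Chang} and states the result. Your proposal supplies the standard negative-gradient-flow argument that underlies that reference, and it is correct; since you also end by pointing to \cite[Lemma 3.2]{Chang}, your approach and the paper's are the same, only with the details spelled out on your side.
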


\subsection{Nonexistence in the half-space}
Let us set $\R^N_+=\{x\in\R^N:x_N>0\}$. We have the following
regularity result.

\begin{lemma}
Any weak solution $u\in X_{\R^N_+}$ of 
\begin{equation}
\label{eqhalf}
\begin{cases}
(-\Delta)^su=|u|^{2^*-2}u&\text{in $\R^N_+$},\\
u\equiv 0&\text{in $\R^N\setminus \R^N_+$},
\end{cases}
\end{equation}
is bounded and continuous in $\R^N$.
\end{lemma}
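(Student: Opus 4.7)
The plan is to prove the lemma in the standard two-step way: first upgrade $u \in \dot H^s(\R^N) \subset L^{2^*}(\R^N)$ to $u \in L^\infty(\R^N)$ via a Brezis--Kato / Moser iteration, then invoke the $L^\infty \Rightarrow C^{0,\alpha}$ regularity for the fractional Laplacian to obtain continuity everywhere, the outside vanishing $u \equiv 0$ on $\C\R^N_+$ taking care of continuity across $\{x_N = 0\}$.

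First, I would split the critical nonlinearity as
\[
|u|^{2^*-2} = a_M(x) + b_M(x), \qquad a_M = |u|^{2^*-2}\,\chi_{\{|u|>M\}}, \ b_M = |u|^{2^*-2}\,\chi_{\{|u|\le M\}},
\]
where $M$ is chosen large enough so that $\|a_M\|_{N/(2s)} = \|u\|_{2^*}^{2^*-2}\big|_{\{|u|>M\}}$ is as small as we need; this is possible because $u \in L^{2^*}(\R^N)$. Then the equation reads $(-\Delta)^s u = a_M\, u + b_M\, u$ with $a_M$ of small $L^{N/(2s)}$ norm and $b_M \in L^\infty$. Testing \eqref{eqhalf} with the truncated power $\varphi = u\,\min(|u|^{2(\beta-1)},T^{2(\beta-1)})$, using the fractional chain-rule inequality $[|u|^\beta]_s^2 \lesssim \beta^2 (u, u|u|^{2(\beta-1)})_s$ (see \cite{IMS2}) and the Sobolev embedding $[v]_s^2 \ge S(N,s)\|v\|_{2^*}^2$, the small-norm term $a_M u$ can be absorbed into the left-hand side, leaving a closed iteration
\[
\|u\|_{2^*\beta}^{\beta} \le C\,\beta\,\bigl(1 + \|u\|_{2\beta}^{\beta}\bigr),
\]
which, starting from $\beta = 2^*/2$ and passing to the limit $T \to +\infty$ by Fatou, inductively gives $u \in L^p(\R^N)$ for every $p \in [2^*,\infty)$.

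Once $u \in L^p$ for some $p > N/(2s)$, the right-hand side $|u|^{2^*-2}u$ lies in $L^q(\R^N)$ with $q = p/(2^*-1) > N/(2s)$, so classical Calder\'on--Zygmund / Riesz potential estimates for the fractional Laplacian (see \cite{IMS2} or analogous references) imply $u \in L^\infty(\R^N)$. At this point the right-hand side of \eqref{eqhalf} belongs to $L^\infty(\R^N) \cap L^1(\R^N)$, and $u$ itself vanishes on the complement of the open half-space, so by local Hölder regularity results for equations of the form $(-\Delta)^s v = f$ with $f \in L^\infty$ we conclude $u \in C^{0,\alpha}_{\mathrm{loc}}(\R^N)$ for some $\alpha \in (0,1)$.

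The main subtlety is the critical growth: the nonlinearity $|u|^{2^*-2}u$ is not a priori subcritical, so a direct bootstrap from the equation fails. The Brezis--Kato splitting above is what unlocks the iteration, exploiting only $u \in L^{2^*}$ and absolute continuity of the integral. Continuity across the boundary $\{x_N = 0\}$ is then automatic: if $x_0 \in \partial\R^N_+$, any sequence $x_n \to x_0$ with $x_n \in \R^N_+$ satisfies $u(x_n) \to u(x_0) = 0$ by the interior Hölder estimate applied on a ball straddling $x_0$ and by $u\equiv 0$ outside $\R^N_+$, giving continuity in all of $\R^N$ as claimed.
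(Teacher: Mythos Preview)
Your boundedness argument is essentially the paper's: both run a Moser iteration in which the critical nonlinearity is split so that the part living on $\{|u|>M\}$ has small $L^{N/(2s)}$ norm and can be absorbed. The paper tests with $u|u|_k^{r-2}$ and splits $\int |u|^{2^*+r-2}$ over $\{|u|\le K_0\}$ and $\{|u|>K_0\}$; your Brezis--Kato phrasing $|u|^{2^*-2}=a_M+b_M$ is the same device. The paper then continues the iteration by hand all the way to $L^\infty$, whereas you stop at ``all $L^p$'' and invoke a Riesz-potential/Calder\'on--Zygmund step; either closes the boundedness part. (A minor slip: your claim ``$p>N/(2s)\Rightarrow q=p/(2^*-1)>N/(2s)$'' is wrong arithmetically---you need $p>(2^*-1)N/(2s)$---but this is harmless since you already have $u\in L^p$ for every $p\ge 2^*$.)

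The genuine gap is the continuity at $\{x_N=0\}$. You assert that local H\"older regularity for $(-\Delta)^s v=f$ with $f\in L^\infty$ yields $u\in C^{0,\alpha}_{\rm loc}(\R^N)$, and then that ``the interior H\"older estimate applied on a ball straddling $x_0$'' gives continuity at $x_0\in\partial\R^N_+$. But the equation $(-\Delta)^s u=|u|^{2^*-2}u$ holds only in $\R^N_+$, not across the hyperplane: for $x$ with $x_N\le 0$ the quantity $(-\Delta)^s u(x)$ is typically nonzero (it sees $u$ on the other side through the nonlocal kernel) and is certainly not equal to $|u|^{2^*-2}u(x)=0$. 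Hence neither a whole-space interior regularity theorem nor interior estimates on balls that meet $\partial\R^N_+$ apply. What you need is a \emph{boundary} statement. The paper handles this by invoking the barrier estimate of \cite[Theorem~4.4]{IMS2},
\[
|u(x)|\le C\,\|(-\Delta)^s u\|_{L^\infty(\R^N_+)}\,(x_N)_+^s,
\]
valid once $u$ is bounded, which forces $u(x)\to 0$ as $x\to x_0\in\{x_N=0\}$ and, combined with interior regularity in $\{x_N>0\}$, gives continuity on all of $\R^N$. Replace your last paragraph with this barrier (or an equivalent $C^s$-up-to-the-boundary result for the fractional Dirichlet problem).
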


\begin{proof}
The following is a modification of \cite[Theorem 3.2]{IMS}. Let us set $\gamma=(2^*/2)^{1/2}$ and $|t|_k:=\max\{|t|,k\}$, for any $k>0$. 
For all $r\geq 2$,  the mapping $t\mapsto t|t|_k^{r-2}$ is Lipschitz in $\R$, hence $u|u|_k^{r-2}$ belongs to $X_{\R^N_+}$. We test the weak form of \eqref{eqhalf} with $u|u|_k^{r-2}$, apply the fractional Sobolev inequality and the elementary inequality (see {\cite[Lemma 3.1]{IMS}})
\[
(a-b)(a|a|_k^{r-2}-b|b|_k^{r-2})\geq \frac{4(r-1)}{r^2}(a|a|_k^{\frac{r}{2}-1}-b|b|_k^{\frac{r}{2}-1})^2,
\]
to obtain
\begin{equation}\label{sc1}
\|u|u|_k^{\frac{r}{2}-1}\|_{2^*}^2\le S(N, s)^{-1}[u|u|_k^{\frac{r}{2}-1}]_s^2\le \frac{C r^2}{r-1} (u,u|u|_k^{r-2})_s\le Cr\int_{\R^N} |u|^{2^*}|u|_k^{r-2}\, dx,
\end{equation}
for some $C>0$ independent of $r\geq 2$ and $k>0$. Letting $k\to \infty$ and noting that $r^{1/r}$ is bounded for $r\geq 2$ gives
\begin{equation}
\label{sc2}
\|u\|_{\gamma^2 r}\leq C\Big(\int_{\R^N} |u|^{2^*+r-2}\, dx\Big)^{\frac 1 r},\qquad \gamma^2=\frac{2^*}{2}>1.
\end{equation}
 Let now $\bar r=2^*+1>2$, fix $\sigma>0$ such that $C\bar r\sigma<1/2$, where $C$ is the last constant appearing in \eqref{sc1} and $K_0$ so large that 
\begin{equation}
\label{piccolo}
\left(\int_{\{|u|> K_0\}}|u|^{2^*}\, dx\right)^{1-\frac{2}{2^*} }\leq\sigma.
\end{equation}
By H\"older inequality and \eqref{piccolo} we have
\begin{align*}
\int_{\R^N} |u|^{2^*}|u|_k^{\bar r-2} \, dx&\le K_0^{\bar r-2}\int_{|u|\leq K_0} |u|^{2^*}\, dx+\int_{\{|u|>K_0\}}|u|^{2^*}|u|_k^{\bar r-2}\, dx \\
&\le K_0^{\bar r-2}\|u\|_{2^*}^{2^*} +\Big(\int_{\R^N} (u^2|u|_k^{\bar r-2})^{\frac{2^*}{2}}\, dx\Big)^{\frac{2}{2^*}}\Big(\int_{\{\{|u|>K_0\}\}}|u|^{2^*} \, dx\Big)^{1-\frac{2}{2^*}} \\
&\le C(u)+\sigma\|u|u|_k^{\frac{\bar r}{2}-1}\|_{2^*}^2.
\end{align*}
Recalling that $C\bar r\sigma<1/2$, $\bar r=2^*+1$, inserting in \eqref{sc1}, and letting $k\to \infty$ we obtain
\[
\|u\|_{\bar q}\le \tilde C(u), \qquad \bar q=\frac{2^*(2^*+1)}{2}.
\]
Since $\bar q>2^*$, we can bootstrap a bound on higher $L^p$ norms through \eqref{sc2} starting from the $L^{\bar q}$ one. Define the sequence  
\[
p_0=\bar q ,\qquad p_{n+1}=\gamma^2(p_n+2 -2^*),
\]
which satisfies $p_n\to +\infty$ (since $\bar q$ is greater than the fixed point of $f(x)=\gamma^2(x+2-2^*)$). Now \eqref{sc2} reads
\[
\|u\|_{p_{n+1}}\leq  C \|u\|_{p_n}^{\gamma^2 \frac{p_n}{p_{n+1}}},
\]
which, iterated, gives $u\in L^{p_n}({\R^N})$ for any $n\geq 0$. For any $p\geq 2^*$ it holds
\[
\int_{\R^N} |u|^p\, dx\leq \int_{\{|u|\leq 1\}} |u|^{2^*}\, dx+ \int_{\{|u|\geq 1\}} |u|^p\, dx
\]
and since $p_n\to +\infty$ this implies that $u\in L^p(\R^N)$ for any $p\geq 2^*$. To obtain a uniform bound, 
we use H\"older's inequality on the last term in \eqref{sc2} with exponent $\gamma r/(r-1)>1$ to get
\[
\int_{\R^N} |u|^{r-1}|u|^{2^*-1}\, dx\leq \|u\|_{\gamma r}^{r-1} \Big(\int_{\R^N} |u|^{\frac{(2^*-1)\gamma r}{r(\gamma-1)+1}}\, dx\Big)^{1-\frac{r-1}{\gamma r}}
\]
and thus \eqref{sc2} becomes
\[
\|u\|_{\gamma^2 r}\leq C(u, r)\|u\|_{\gamma r}^{1-\frac 1 r},\qquad r\geq 2,\quad \gamma=\sqrt{\frac{2^*}{2}}>1,
\]
with 
\[
C(u, r)= C^{\frac 1 r} \Big(\int |u|^{\frac{(2^*-1)\gamma r}{r(\gamma-1)+1}}\, dx\Big)^{\frac{1}{r}-\frac{r-1}{\gamma r^2}}.
\]
We choose $r_n=\gamma^n\to +\infty$, letting
\[
t_n=\frac{(2^*-1)\gamma r_n}{r_n(\gamma-1)+1}\  \nearrow\  \bar p=\frac{(2^*-1)\gamma}{\gamma-1}>2^*.
\]
By monotone convergence theorem (separately on $\{|u|\leq 1\}$ and $\{|u|>1\}$) it holds
\[
\int_{\R^N} |u|^{t_n}\, dx\to \int_{\R^N} |u|^{\bar p}\, dx
\]
which is finite. In particular, $C(u, \gamma^n)$ is bounded for sufficiently large $n$ by a constant $C(u)$ and thus we obtained 
\[
\|u\|_{\gamma^{n+2}}\leq C(u) \|u\|_{\gamma^{n+1}}^{1-\frac{1}{\gamma^n}}
\]
for sufficiently large $n$. By a standard argument this implies that $\|u\|_\infty=\lim_n\|u\|_{\gamma^n}$ is finite. We now prove that $u\in C^0(\R^N)$. Interior regularity in $\{x_N>0\}$ follows from the local regularity result {\cite[Theorem 5.4]{IMS2}}, while from \cite[Theorem 4.4]{IMS2} we get 
\[
|u(x)|\leq C\|(-\Delta)^s u\|_\infty (x_N)_+^s=C\|u\|_\infty^{2^*-1}(x_N)_+^s,\qquad \forall x\in \R^N_+
\]
(notice that only the boundedness of $u$ and a uniform sphere condition on $\Omega$ is used in the proof of \cite[Theorem 4.4]{IMS2}). From this estimate we deduce that $u(x)\to 0$ as $x\to x_0\in \{x_N=0\}$, and thus the continuity of $u$ in the whole $\R^N$.
\end{proof}

\noindent
From \cite[Corollary 1.6]{MMW} we immediately obtain

\begin{corollary}
\label{nosol}
There is no nontrivial nonnegative weak solution $u\in X_{\R^N_+}$ of \eqref{eqhalf}.
\end{corollary}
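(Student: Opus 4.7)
The plan is to combine the regularity statement from the previous lemma with a Liouville-type nonexistence theorem for critical equations on the half-space. Since the preceding lemma guarantees that any weak solution $u\in X_{\R^N_+}$ of \eqref{eqhalf} is bounded and continuous on all of $\R^N$ (with $u\equiv 0$ on $\{x_N\leq 0\}$), the regularity hypotheses needed to invoke such a classification/nonexistence result are satisfied. The contradiction/conclusion is then a direct citation of \cite[Corollary 1.6]{MMW}, which rules out bounded nonnegative nontrivial weak solutions of the critical fractional Lane-Emden equation in a half-space with zero complementary data.

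If the MMW result were unavailable, the strategy would be to derive a Pohozaev-type identity for the fractional Laplacian on the half-space, in the spirit of Ros-Oton and Serra. Concretely, after checking that $u$ has enough boundary regularity (controlled by $(x_N)_+^s$ near $\{x_N=0\}$, which actually already appears in the proof of the lemma via \cite[Theorem 4.4]{IMS2}), one multiplies the equation by $x\cdot\nabla u$ on a large half-ball and integrates; the resulting identity couples the $L^{2^*}$-norm of $u$ with a boundary term proportional to $\int_{\{x_N=0\}}(u/x_N^s)^2\,dx'$. Since the bulk integrals cancel at the critical exponent, this forces $u\equiv 0$. An alternative route is a moving planes argument in the $x_N$-direction adapted to the nonlocal setting, exploiting the fact that $u$ vanishes outside $\R^N_+$ to slide planes from $-\infty$ and obtain monotonicity that is incompatible with a nontrivial solution.

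The main obstacle, and what the preceding lemma is designed to overcome, is exactly the regularity: the classification/Pohozaev/moving planes arguments all require at least $L^\infty$ bounds and continuity up to the boundary $\{x_N=0\}$, neither of which is built into the definition of weak solution in $X_{\R^N_+}$. Once the lemma provides these, the corollary follows in one line by quoting \cite[Corollary 1.6]{MMW}.
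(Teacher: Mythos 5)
Your argument is exactly the paper's: the preceding regularity lemma supplies boundedness and continuity, and the conclusion is then the direct citation of \cite[Corollary 1.6]{MMW}, which is precisely how the paper deduces Corollary \ref{nosol}. The additional Pohozaev/moving-planes sketches are not needed, but the core one-line deduction matches the paper's proof.
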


\subsection{Global compactness}

We now recall the profile decomposition of the functional $I_\Omega$ proved in \cite{pal-pis2}, specialized to nonnegative Palais-Smale sequences in the manifold ${\cal N}_+(\Omega)$. 
 A Palais-Smale sequence for $I_\Omega:{\cal N}_+(\Omega)\to \R$ at the level $c\in \R$ is a sequence $\{u_n\}\subset {\cal N}_+(\Omega)$ such that 
\[
I_\Omega(u_n)\to c, \qquad \nabla_{\cal N} I_\Omega(u_n)\to 0. 
\]
Moreover $I_\Omega:{\cal N}_+(\Omega)\to \R$ is said to satisfy the Palais-Smale condition at level $c$ (briefly, $(PS)_c$) if every Palais-Smale sequence $\{u_n\}\subseteq {\cal N}_+(\Omega)$ at level $c$ is relatively compact.
We say that $c_0$ is a {\em critical level} for $I_\Omega$ and write $c_0\in C_{I_\Omega}$ if there exists $u_0\in {\cal N}_+(\Omega)$ such that $\nabla_{\cal N}I_\Omega(u_0)=0$ and $I_\Omega(u_0)=c_0$. 

\begin{proposition}
\label{PalaisSmale}
Let $\Omega$ be a bounded open subset of $\R^N$ with smooth boundary. Then
\begin{enumerate}
\item
$I_\Omega:{\cal N}_+(\Omega)\to \R$ satisfies $(PS)_c$ at every level $c$ of the form
\[
c\neq c_0+mc_\infty, \qquad c_0\in C_{I_\Omega}\cup \{0\}, \quad m\in \N.
\]
\item $I_\Omega:{\cal N}(\Omega)\to \R$ satisfies $(PS)_c$ at every level $c\in ]c_\infty, 2c_\infty[$.
\end{enumerate}
\end{proposition}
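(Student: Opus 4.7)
The plan is to invoke the profile decomposition (global compactness) theorem of Palais and Pisante \cite{pal-pis2}, which applies to any bounded Palais--Smale sequence for $I_\Omega$ on $X_\Omega$. A PS sequence on the constraint is automatically such: on ${\cal N}(\Omega)$ one has $I_\Omega(u)=\frac{s}{N}[u]_s^2$, so $I_\Omega(u_n)\to c$ forces $[u_n]_s^2\to\frac{N}{s}c$, and Proposition~\ref{Nehariproperties}(3) upgrades the constrained condition $\nabla_{\cal N}I_\Omega(u_n)\to 0$ to the unrestricted one $\nabla I_\Omega(u_n)\to 0$. The resulting profile decomposition has the form
\[
u_n = u_\infty + \sum_{k=1}^{K}(\lambda_n^k)^{\frac{N-2s}{2}}W^k\bigl(\lambda_n^k(\,\cdot\,-x_n^k)\bigr) + o(1)\quad\text{in }\dot H^s(\R^N),
\]
where $u_\infty\in X_\Omega$ is a (possibly zero) critical point of $I_\Omega$, each $W^k$ is a nontrivial critical point of the critical equation on either $\R^N$ or on a half-space obtained as a boundary blow-up of $\Omega$, and the energies decouple as $I_\Omega(u_n)=I_\Omega(u_\infty)+\sum_{k=1}^K E(W^k)+o(1)$, with $E(W^k)$ denoting the energy of $W^k$ on its natural domain.

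For part~(1) the sign condition $u_n\geq 0$ passes to the weak limit, $u_\infty\geq 0$, and to each concentrating profile, $W^k\geq 0$. By Corollary~\ref{nosol} no nontrivial nonnegative solution exists on any half-space, so every $W^k$ is a nonnegative critical point on $\R^N$, which by \cite{classif} must be a Talentian $U_{\eps_k,z_k}$ of energy exactly $c_\infty$. The weak limit $u_\infty$ is either zero or a nontrivial nonnegative critical point of $I_\Omega$ belonging to ${\cal N}_+(\Omega)$, hence $I_\Omega(u_\infty)\in C_{I_\Omega}\cup\{0\}$. The decomposition therefore gives $c=I_\Omega(u_\infty)+K c_\infty$; if $c$ is not of this form we must have $K=0$, producing strong $\dot H^s$-convergence of $u_n$ to $u_\infty$. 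Since ${\cal N}(\Omega)$ is bounded away from zero, $u_\infty\neq 0$ and the convergence takes place in ${\cal N}_+(\Omega)$, proving $(PS)_c$.

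For part~(2) no sign is fixed. Every nontrivial critical point $v$ of the critical equation on an open set satisfies $E(v)\geq c_\infty$: testing the equation against $v$ yields $[v]_s^2=\|v\|_{2^*}^{2^*}$, and the sharp fractional Sobolev inequality then forces $[v]_s^2\geq S(N,s)^{N/(2s)}$, whence $E(v)=\frac{s}{N}[v]_s^2\geq c_\infty$. Inserting this lower bound into $c=I_\Omega(u_\infty)+\sum_k E(W^k)$ and using $c<2c_\infty$, at most one of the summands can be nonzero: two nontrivial summands would force $c\geq 2c_\infty$. Since $c>c_\infty>0$, exactly one of them is nonzero. If it is $I_\Omega(u_\infty)$ then $K=0$ and $u_n\to u_\infty$ strongly, giving $(PS)_c$. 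Otherwise $u_\infty=0$, $K=1$ and $c=E(W^1)\in\,]c_\infty,2c_\infty[$, which is impossible: a Talentian atom $W^1=\pm U_{\eps,z}$ would give $c=c_\infty$; a signed half-space atom (either $W^1\geq 0$ or $-W^1\geq 0$) is forbidden by Corollary~\ref{nosol}; and any truly sign-changing critical point on $\R^N$ or on a half-space carries energy at least $2c_\infty$, as is seen by re-applying the global compactness theorem to the stationary sequence $\{W^1\}$, which cannot produce a single Talentian atom and therefore splits into at least two atoms of energy $c_\infty$ each.

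The main obstacle I anticipate is this last step, namely the sign-changing energy gap $E(v)\geq 2c_\infty$. For $s=1$ it is immediate from the orthogonal splitting $I(v)=I(v^+)+I(v^-)$, but for $s<1$ the nonlocality of $(-\Delta)^s$ destroys that orthogonality, and one has to fall back on the iterated application of the Palais--Pisante decomposition sketched above. Everything else is an exercise in bookkeeping once the profile decomposition is in place.
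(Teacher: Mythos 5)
Part (1) of your proposal and the general bookkeeping in part (2) follow the paper's own route (Palais--Pisante decomposition, nonnegativity inherited by the weak limit and by the rescaled profiles, Corollary \ref{nosol} plus rotation invariance to kill half-space bubbles, the classification of \cite{classif} to force each nonnegative bubble to be a Talentian of energy exactly $c_\infty$), so that part is fine, up to a harmless logical slip (if $c$ avoids every value $c_0+mc_\infty$, including $m=0$, the decomposition is already contradictory, so there are simply no PS sequences at that level; either reading gives $(PS)_c$). The genuine gap is precisely the step you flagged at the end: the energy gap $E(v)\geq 2c_\infty$ for sign-changing critical points. Your proposed derivation --- apply the global compactness theorem to the stationary sequence $\{W^1\}$ and conclude that it ``splits into at least two atoms of energy $c_\infty$'' --- does not work. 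A constant sequence is trivially relatively compact, so its profile decomposition is trivial: the weak limit is $W^1$ itself and there are no bubbles at all. The decomposition theorem never splits a single fixed solution into pieces; bubbles only record an actual loss of compactness along a non-convergent sequence, and in the unsigned version of the theorem the limit and the profiles are allowed to be sign-changing solutions. Hence nothing in that theorem excludes the scenario $u_\infty=0$, $K=1$ with a sign-changing atom of energy in $]c_\infty,2c_\infty[$: the doubling inequality is an input to, not an output of, global compactness.

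The correct argument is the one the paper invokes, namely \cite[Lemma 2.5]{SSS}, and it is elementary; moreover, contrary to your worry, the nonlocality helps rather than hurts. If $v$ is a sign-changing solution in an arbitrary domain, test the equation with $v^{\pm}$. Since $v^+(x)v^-(x)=0$ pointwise, one has $(v(x)-v(y))(v^+(x)-v^+(y))=(v^+(x)-v^+(y))^2+v^+(x)v^-(y)+v^+(y)v^-(x)$, so testing gives $[v^+]_s^2+\kappa_+=\|v^+\|_{2^*}^{2^*}$ with a \emph{nonnegative} cross term $\kappa_+$ (this is exactly where the nonlocal interaction has a favourable sign). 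Combining with the Sobolev inequality, $S(N,s)\|v^+\|_{2^*}^2\leq [v^+]_s^2\leq \|v^+\|_{2^*}^{2^*}$, hence $\|v^+\|_{2^*}^{2^*}\geq S(N,s)^{N/(2s)}$, and likewise for $v^-$. Since for a solution $E(v)=\frac{s}{N}\big(\|v^+\|_{2^*}^{2^*}+\|v^-\|_{2^*}^{2^*}\big)$, we get $E(v)\geq \frac{2s}{N}S(N,s)^{N/(2s)}=2c_\infty$. With this lemma in hand your case analysis in part (2) closes exactly as in the paper; without it, the single sign-changing bubble at a level in $]c_\infty,2c_\infty[$ is not ruled out and the proof of part (2) is incomplete.
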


\begin{proof}
{\em (1)} - Let $\{u_n\}\subseteq {\cal N}_+(\Omega)$ be a Palais-Smale sequence at some level $c$. Proposition \ref{Nehariproperties} shows that $\{u_n\}$ is a Palais-Smale sequence for $I_\Omega:X_\Omega\to \R$. Suppose that $\{u_n\}$ is not relatively compact. Then by {\cite[Theorem 1.1]{pal-pis2}}  there exist $u^{(0)}$ solving problem \eqref{problem} such that $u_n\weakto u^{(0)}$ and $u^{(j)}\neq 0$, $V^{(j)}\in \R^N$, $j=1,\dots, m$ for some finite $m\in \N$  with the following properties:
\begin{equation}
\label{profiledec}
 c= I_\Omega(u^{(0)})+\sum_{j=1}^mI_\Omega(u^{(j)}),
 \end{equation}
\begin{equation}
\label{profiledec2}
\text{for $j=1,\dots , m$, $u^{(j)}$ solves \eqref{problem} in $\Omega^{(j)}:=\{x\in \R^N: V^{(j)}\cdot x> 0\}$ and}
\end{equation}
 \begin{equation}
 \label{profiledec3}
\text{$u^{(j)}$ is a weak limit in $\dot H^s(\R^N)$ of a subsequence of rescaled-translations of $u_n$}.
\end{equation}
Clearly $u_0\in {\cal N}_+(\Omega)\cup\{0\}$. On the other hand, $u_n$ is nonnegative for any $n$, and \eqref{profiledec3} implies  $u^{(j)}\geq 0$ for any $j=1,\dots, m$. By rotation invariance and the previous corollary, there are no solutions of \eqref{problempositive} in the half space, so that actually $V^{(j)}=0$ for all $j=1,\dots, m$, i.e. $u^{(j)}$ is a nonnegative, nontrivial, {\em entire} solution of \eqref{problempositive}. Since the latters are only of the form \eqref{Talentians}, we get $I(u^{(j)})=c_\infty$ for all $j=1, \dots, m$. Due to \eqref{profiledec} we thus obtain 
 \[
 c=I_\Omega(u_0)+mc_\infty,\qquad I_\Omega'(u_0)=0
 \]
 contrary to our assumption.
 \vskip2pt
 {\em (2)} - Let $\{u_n\}$ be a $(PS)_c$ sequence at a level $c\in \ ]c_\infty, 2c_\infty[$, and $\{u^{(j)}\}$ the corresponding profile decomposition. Due to \cite[Lemma 2.5]{SSS} any sign-changing solution $u^{(j)}$ of \eqref{problem} in an arbitrary domain $\Omega$ satisfies $I_\Omega(u^{(j)})\geq 2c_\infty$. From \eqref{profiledec} we infer from $c<2c_\infty$ that no $u^{(j)}$ is sign changing, and from $c>c_\infty$ that $m=0$. The compactness now follows from {\cite[Theorem 1.1]{pal-pis2}}. 
 \end{proof}

\section{Estimates}
\label{Estimates}

\noindent
Let $R$, $\rho>0$. Choose $\psi\in C^\infty_c(\R^N)$ such that 
\[
0\leq \psi\leq 1, \qquad \psi(x)=0\quad \text{if $|x|\geq 2\rho$}, \qquad  \psi(x)= 1\quad \text{if $|x|\leq \rho$},
\]
and $\omega\in C^\infty(\R^{N-1})$ such that 
\[
0\leq \omega\leq 1,\qquad \omega(x')=1\quad \text{if $|x'|\geq 2$}, \qquad \omega(x')=0\quad \text{if $|x'|\leq 1$}.
\]
Finally, for any $\delta>0$ and  $z\in {\mathbb S}_R^{N-1}$, define
\[
\omega_\delta(x)=\omega(\frac{x'}{\delta}), \qquad \u (x)=\omega_\delta(x)\psi(x-z)\U(x).
\]

\begin{proposition}
\label{one}
There exists $C_1$ such that for each $\eps>\delta>0$ sufficiently small and $z\in \mathbb{S}_R^{N-1}$ it holds
\begin{equation} \label{stima1}
[\u]_s^2 \leq \frac{N}{s}c_\infty+C_1\frac{\delta^{N-1-2s}}{\eps^{N-2s}}+o(1),
\end{equation}
where $o(1)\to 0$ for $\eps \to 0$ independently of $\delta$.
\end{proposition}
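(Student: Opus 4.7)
The strategy is to subtract off the full Talenti bubble $\U=U_{\eps,z}$, whose squared seminorm is exactly $\frac{N}{s}c_\infty$, and to control the $\dot H^s$-cost of the two cut-off factors separately. Set $h:=(1-\omega_\delta\,\psi(\cdot-z))\,\U\geq 0$, so that $\u=\U-h$. Expanding the squared seminorm,
\[
[\u]_s^2=[\U]_s^2-2(\U,h)_s+[h]_s^2.
\]
Since $\U$ weakly solves $(-\Delta)^s\U=\U^{2^*-1}$ on $\R^N$ and $h\geq 0$, one gets
\[
(\U,h)_s=\int_{\R^N}\U^{2^*-1}h\,dx\geq 0.
\]
Combined with $[\U]_s^2=\tfrac{N}{s}c_\infty$, this immediately yields $[\u]_s^2\leq \tfrac{N}{s}c_\infty+[h]_s^2$, so the proof reduces to showing $[h]_s^2\leq C_1\delta^{N-1-2s}/\eps^{N-2s}+o(1)$.

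\textbf{Splitting $h$.} I would decompose $h=\zeta+\eta$ via the partition identity $1-\omega_\delta\psi(\cdot-z)=(1-\psi(\cdot-z))+\psi(\cdot-z)(1-\omega_\delta)$, with $\zeta:=(1-\psi(\cdot-z))\U$ the far tail of the bubble (supported outside $B_\rho(z)$) and $\eta:=\psi(\cdot-z)(1-\omega_\delta)\U$ the portion of the bubble living inside the thin cylindrical tube $T_{2\delta}:=\{|x'|\leq 2\delta\}$. The triangle inequality gives $[h]_s^2\leq 2[\zeta]_s^2+2[\eta]_s^2$. The tail contribution $[\zeta]_s^2=O(\eps^{N-2s})=o(1)$ is a standard truncation estimate, using the explicit decay $\U(x)\sim \eps^{(N-2s)/2}|x-z|^{-(N-2s)}$ at infinity, and is uniform in $\delta$.

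\textbf{The cylinder estimate.} The central step is to show $[\eta]_s^2\leq C\delta^{N-1-2s}/\eps^{N-2s}$. Setting $\tilde\chi:=\psi(\cdot-z)(1-\omega_\delta)$, apply the pointwise identity $\eta(x)-\eta(y)=\tilde\chi(x)(\U(x)-\U(y))+\U(y)(\tilde\chi(x)-\tilde\chi(y))$ together with $(a+b)^2\leq 2a^2+2b^2$ to split $[\eta]_s^2\leq C_{N,s}(I_1+I_2)$, where
\[
I_1=\iint\tilde\chi(x)^2\frac{(\U(x)-\U(y))^2}{|x-y|^{N+2s}}\,dx\,dy,\qquad I_2=\iint \U(y)^2\frac{(\tilde\chi(x)-\tilde\chi(y))^2}{|x-y|^{N+2s}}\,dx\,dy.
\]
The worst configuration is $z$ on (or near) the positive $x_N$-axis, where the peak of $\U$ lies inside $T_{2\delta}$. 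Rescaling $(x,y)=(z+\eps\mu,z+\eps\nu)$ and setting $\tau:=\delta/\eps$, the bubble scaling turns $I_1$ into the integral of the smooth $L^1$-Gagliardo density of $\Uoz$ over a cylinder of radius $\tau$, giving $I_1\leq C\tau^{N-1}$. For $I_2$, since $\tilde\chi$ effectively depends only on $x'\in \R^{N-1}$ on the support of $\psi(\cdot-z)$, the $x_N,y_N$-integration can be carried out explicitly, reducing the double integral to the $(N-1)$-dimensional Gagliardo seminorm of the fixed profile $1-\omega$ at scale $\tau$, which yields $I_2\leq C\tau^{N-1-2s}$. Combining and using $\tau^{N-1-2s}=\eps\cdot\delta^{N-1-2s}/\eps^{N-2s}$ gives the required bound.

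\textbf{Main obstacle.} The delicate technical point is ensuring uniformity of the estimates in $z\in\mathbb{S}^{N-1}_R$, with the worst case being $z$ close to the $x_N$-axis so that the bubble lies essentially inside the excised tube. That the cost of this excision still vanishes at the rate $\delta^{N-1-2s}$ rests on the $(N-1)$-dimensional Gagliardo seminorm of $1-\omega$ being finite, i.e.\ on the $x_N$-axis having vanishing $\dot H^s$-capacity; this requires $N-2s\geq 1$, which is precisely the dimensional hypothesis in Theorem~\ref{mainthm2} and the reason $N=2$ forces $s\leq 1/2$.
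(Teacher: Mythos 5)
Your proposal is correct in substance but follows a genuinely different route from the paper. The paper never isolates $h=\U-\u$: it computes $[\u]_s^2=\int(-\Delta)^s\u\,\u\,dx$ directly, uses the equation in the form $\eta^2\,\U(-\Delta)^s\U=\eta^2\U^{2^*}\leq\U^{2^*}$ (with $\eta=\omega_\delta\,\psi(\cdot-z)$ the full cut-off), and is then left with the symmetrized commutator term $\iint(\eta(x)-\eta(y))^2\U(x)\U(y)|x-y|^{-N-2s}dx\,dy$; the $\omega_\delta$-part of that term is handled exactly by your dimensional-reduction mechanism (crude bound $\U(x)\U(y)\leq C\eps^{-(N-2s)}$, integrate out $x_N,y_N$, rescale the $(N-1)$-dimensional Gagliardo seminorm of $\omega$ to get $\delta^{N-1-2s}/\eps^{N-2s}$), while the $\psi$-part is estimated by scaling plus Hardy--Littlewood--Sobolev, with a separate case analysis for $N=2,3$. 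Your reduction $[\u]_s^2\leq\frac{N}{s}c_\infty+[h]_s^2$ via $(\U,h)_s=\int\U^{2^*-1}h\,dx\geq0$ is an equivalent but cleaner way of not paying for the main term, your rescaled treatment of the tube term even gains a factor of $\eps$ over the paper's bound (using the integrability in $\nu_N$ of $U_{1,0}^2$, which needs $N-2s>1/2$, guaranteed by the standing hypotheses), and your scaling argument for the tail replaces the paper's HLS case analysis.

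Two steps are stated too casually and need shoring up. First, the cut-off factor $\psi(\cdot-z)(1-\omega_\delta)$ does \emph{not} depend only on $x'$ on the relevant region: in $I_2$ the difference contains, besides the $(1-\omega_\delta)$-increment you reduce to the $(N-1)$-dimensional seminorm, a $\psi$-increment term (e.g.\ $x\in B_\rho(z)$, $y$ in the tube but outside $B_{2\rho}(z)$, with $x'=y'$), which must be estimated separately; it is harmless --- a routine computation bounds it by $C\delta^{N-1}/\eps^{N-2s}$, within budget --- but as written your reduction silently drops it. Second, the claim $[\zeta]_s^2=O(\eps^{N-2s})$ is true but not a one-line ``standard truncation estimate'': the rescaled tail profile $(1-\psi)\,|x-z|^{-(N-2s)}$ fails to lie in $L^2(\R^N)$ when $N\leq4s$ (e.g.\ $N=3$, $s>3/4$, and the borderline $N=2$, $s=1/2$), so the far-off-diagonal part of its Gagliardo seminorm cannot be bounded by $|g(x)-g(y)|^2\leq 2g(x)^2+2g(y)^2$ alone and one must exploit the cancellation (or argue as the paper does via HLS). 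With those two points made explicit, your argument gives the stated bound, uniformly in $z\in\mathbb{S}^{N-1}_R$.
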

\begin{proof}
In the following by $C$ we denote a generic constant depending only on $\psi$, $\omega$, $R$, $\rho$ and the numerical data $s, N$.

We let $\eta(x)=\omega_\delta(x)\psi(x-z)$ and, being $\u=\eta \U\in C^\infty_c(\R^N)$, notice that
\[
[\u]_s^2=\int_{\R^N}(-\Delta)^s\u(x) \u(x) dx,
\] 
therefore 
\[
\begin{split}
[\u]_s^2&=C(N, s)\int_{\R^{N}}\eta(x)\U(x){\rm {\small PV}}\int_{\R^N}\frac{\eta(x) \U (x)-\eta(y) \U(y)}{|x-y|^{N+2s}}\, dy\, dx\\
&=C(N, s)\int_{\R^{N}}\eta^2(x)\U(x){\rm PV}\int_{\R^N}\frac{\U(x)-\U(y)}{|x-y|^{N+2s}} \, dy\, dx\\
&\quad +C(N, s)\int_{\R^{N}}\eta(x)\U(x){\rm PV}\int_{\R^N}\frac{(\eta(x)-\eta(y))}{|x-y|^{N+2s}}\U(y)\, dy\, dx\\
&=\int_{\R^{N}}\eta^2\U(-\Delta)^s\U dx+C\int_{\R^{N}}\eta(x)\U(x){\rm PV}\int_{\R^N}\frac{\eta(x)-\eta(y)}{|x-y|^{N+2s}}\U(y)\, dy\, dx\\
& =I_1+CI_2.
\end{split}
\]
We estimate separately the two integrals.
For $I_1$ we have $0\leq \eta\leq 1$ and $\U(-\Delta)^s\U=\U^{2^*}$, thus
\[
I_1=\int_{\R^N}\U^{2^*}\eta^2\, dx\leq \|\U\|_{2^*}^{2^*}=\frac{N}{s}c_\infty
\]
by \eqref{energyT}. For $I_2$ notice that
\[
\begin{split}
2I_2&=\int_{\R^{N}}\eta(x)\U(x){\rm PV}\int_{\R^N}\frac{\eta(x)-\eta(y)}{|x-y|^{N+2s}}\U(y)\, dx\, dy\\
&\quad+\int_{\R^{N}}\eta(y)\U(y){\rm PV}\int_{\R^N}\frac{\eta(y)-\eta(x)}{|x-y|^{N+2s}}\U(x)\, dx\, dy\\
&=\int_{\R^{2N}}\frac{(\eta(x)-\eta(y))^2}{|x-y|^{N+2s}}\U(x)\U(y)\, dx\, dy.
\end{split}
\]
Being $|\omega_\delta|\leq 1$ and $\eta(x)=\omega_\delta(x)\psi(x-z)$, we have
\[
\begin{split}
|\eta(x)-\eta(y)|&\leq |\psi(x-z)||\omega_\delta(x)-\omega_\delta(y)|+|\omega_\delta(y)||\psi(x-z)-\psi(y-z)|\\
&\leq \psi(x-z)|\omega_\delta(x)-\omega_\delta(y)|+|\psi(x-z)-\psi(y-z)|.
\end{split}
\] 
Therefore we get, through a translation
\begin{equation}
\label{temp}
\begin{split}
I_2&\leq 2\int_{\R^{2N}}\frac{(\omega_\delta(x)-\omega_\delta(y))^2}{|x-y|^{N+2s}}\psi^2(x-z)\U(x)\U(y)\, dx\, dy\\
&\qquad+2\int_{\R^{2N}}\frac{(\psi(x)-\psi(y))^2U_{\eps, 0}(x)U_{\eps, 0}(y)}{|x-y|^{N+2s}} dx\, dy.
\end{split}
\end{equation}
To estimate the first term, let $h\in C_c^{\infty}(\R)$ be such that
$\psi(x', x_N)\leq h(x_N)\leq 1$, and compute 
\begin{equation}
\label{omegadelta}
\begin{split}
\int_{\R^{2N}}&\frac{(\omega_\delta(x)-\omega_\delta(y))^2}{|x-y|^{N+2s}}\psi^2(x-z)\U(x)\U(y)\, dx\, dy\\
&\leq\frac{1}{\eps^{N-2s}}
\int_{\R^{2N}}\frac{(\omega_\delta(x')-\omega_\delta(y'))^2}{(|x'-y'|^2+|x_N-y_N|^2)^{\frac{N+2s}{2}}}h^2(x_N-z_N)\, dx\, dy\\
&= \frac{1}{\eps^{N-2s}}
\int_{\R^{2(N-1)}}\frac{(\omega_\delta(x')-\omega_\delta(y'))^2}{|x'-y'|^{N+2s}}\int_{\R^2}\frac{h^2(x_N-z_N)}{(1+\frac{|x_N-y_N|^2}{|x'-y'|^2})^{\frac{N+2s}{2}}}\, dx_N\, dy_N\, dx'\, dy'\\
&=\frac{1}{\eps^{N-2s}}
\int_{\R^{2(N-1)}}\frac{(\omega_\delta(x')-\omega_\delta(y'))^2}{|x'-y'|^{N-1+2s}}\, dx'\, dy'\int_{\R}h^2(x_N-z_N)\, dx_N\int_\R\frac{1}{(1+t^2)^{\frac{N+2s}{2}}}\, dt\\
& \leq \frac{C}{\eps^{N-2s}}\delta^{N-1-2s}\int_{\R^{2(N-1)}}\frac{(\omega(x')-\omega(y'))^2}{|x'-y'|^{N-1+2s}}\, dx'\, dy'=C\frac{\delta^{N-1-2s}}{\eps^{N-2s}}.
\end{split}
\end{equation}
Finally, we estimate the second term in \eqref{temp}.  Notice that by scaling
\[
\begin{split}
\int_{\R^{2N}}\frac{(\psi(x)-\psi(y))^2U_{\eps, 0}(x)U_{\eps, 0}(y)}{|x-y|^{N+2s}} dx\, dy&=
\int_{\R^{2N}}\frac{(\psi(\eps x)-\psi(\eps y))^2\Uoz( x)\Uoz( y)}{| x- y|^{N+2s}} d x\, dy\\
&\leq {\rm Lip}(\psi)^2\eps^2\int_{\R^{2N}}\frac{\Uoz( x)\Uoz( y)}{|x- y|^{N-2+2s}} dx\, dy.
\end{split}
\]
We apply  Hardy-Littlewood-Sobolev's inequality to the last integral with exponents 
\[
\frac{1}{p}+\frac{1}{p}=1+\frac{2-2s}{N} \quad \leftrightarrow\quad p=\frac{2N}{N-2s+2}
\]
and obtain
\[
\int_{\R^{2N}}\frac{\Uoz(x)\Uoz( y)}{| x- y|^{N-2+2s}} dx\, dy\leq C\|\Uoz\|_p^2
\]
which is finite as long as $p(N-2s)>N$, i.e., $N>2+2s$. This concludes the proof for $N\geq 4$. If $N=2$ or $3$ we write
\[
\begin{split}
\int_{\R^{2N}}&\frac{(\psi(x)-\psi(y))^2U_{\eps, 0}(x)U_{\eps, 0}(y)}{|x-y|^{N+2s}} dx\, dy\\
&\qquad \leq {\rm Lip}(\psi)\int_{B_{4\rho}\times B_{4\rho}}\frac{U_{\eps, 0}(x)U_{\eps, 0}(y)}{|x-y|^{N-2+2s}} dx\, dy+2\int_{B_{2\rho}\times \C B_{4\rho}}\frac{U_{\eps, 0}(x)U_{\eps, 0}(y)}{|x-y|^{N+2s}} dx\, dy=I_3+I_4
\end{split}
\]
since if $x\in B_{4\rho}\setminus B_{2\rho}$ and $y\in \C B_{4\rho}$, $\psi(x)=\psi(y)=0$. The integral $I_3$ can be estimated through the Hardy-Littlewood-Sobolev inequality with exponent $p$ given by
\[
\frac{1}{2^*}+\frac{1}{p}=1+\frac{2-2s}{N} \quad \leftrightarrow\quad p=\frac{2N}{N-2s+4}, 
\]
and we obtain
\[
I_3\leq C\|U_{\eps, 0}\|_{2^*}\left(\int_{B_{4\rho}}\left(\frac{\eps}{\eps^2+|x|^2}\right)^{\frac{N-2s}{2}p}\, dx\right)^{\frac 1 p}\leq C\eps^{\frac{N-2s}{2}}\left(\int_{B_{4\rho}}\frac{1}{|x|^{p(N-2s)}}\, dx\right)^{\frac 1 p}
\]
the last integral being finite as long as $p(N-2s)<N$. Substituting $p$, we get $N<2s+4$, which holds for $N=2,3$. For $I_4$ we directly have 
\[
U_{\eps, 0}(y)\leq C\eps^{\frac{N-2s}{2}},\quad \forall y\in \C B_{4\rho}\quad \text{ and }\quad U_{\eps, 0}(x)\leq C\left(\frac{\eps}{|x|^2}\right)^{\frac{N-2s}{2}},\quad \forall x\in B_{2\rho}
\]
which implies, being $|z|=|x-y|\geq 2\rho$ for  any $x\in B_{2\rho}$, $y\in \C B_{4\rho}$,
\[
I_4\leq  C\eps^{N-2s}\int_{B_{2\rho}}\frac{1}{|x|^{N-2s}}\, dx\int_{\{|z|\geq 2\rho\}}\frac{1}{|z|^{N+2s}}\, dz\leq C\eps^{N-2s}.
\]

\end{proof}

\begin{proposition}
\label{two}
There exists $C_2>0$ such that for $\eps>\delta>0$ sufficiently small and $|z|=R$
\begin{equation}\label{stima3}
\int_{\R^N} \u^{2^*}\,dx\geq
\frac{N}{s}c_\infty-C_2\frac{\delta^{N-1}}{\varepsilon^N} -o(1),
\end{equation}
where $o(1)\to 0$ for $\eps \to 0$ independently of $\delta$.
\end{proposition}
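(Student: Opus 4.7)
The plan is to use \eqref{energyT} as the baseline, $\int_{\R^N}U_{\eps,z}^{2^*}\,dx=\tfrac{N}{s}c_\infty$, and to control the mass lost when forming $\u=\omega_\delta\psi(\cdot-z)U_{\eps,z}$. Since $\omega_\delta,\psi$ take values in $[0,1]$, the elementary inequality $1-a^{2^*}b^{2^*}\leq(1-a^{2^*})+(1-b^{2^*})$ on $[0,1]^2$ (which is just $(1-a^{2^*})(1-b^{2^*})\geq 0$), combined with the inclusions $\mathrm{supp}(1-\omega_\delta^{2^*})\subseteq\{|x'|\leq 2\delta\}$ and $\mathrm{supp}(1-\psi^{2^*}(\cdot-z))\subseteq\{|x-z|\geq\rho\}$, reduces the task to bounding the two integrals
\[
E_1:=\int_{\{|x'|\leq 2\delta\}} U_{\eps,z}^{2^*}\,dx,\qquad E_2:=\int_{\{|x-z|\geq\rho\}} U_{\eps,z}^{2^*}\,dx.
\]

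The tail $E_2$ is routine: after the scaling $y=(x-z)/\eps$ it becomes $\int_{\{|y|\geq\rho/\eps\}}\Uoz^{2^*}\,dy$, an integral tail of an $L^1$ function, hence $o(1)$ as $\eps\to 0$, uniformly in $\delta$ and $z$; this accounts for the $-o(1)$ in \eqref{stima3}. For the main term $E_1$, I would use the explicit form $U_{\eps,z}^{2^*}(x)=d_{N,s}^{2^*}\eps^N/(\eps^2+|x-z|^2)^N$ and integrate in the $x_N$-variable first via the substitution $t=(x_N-z_N)/\sqrt{\eps^2+|x'-z'|^2}$, obtaining
\[
\int_\R\frac{\eps^N\,dx_N}{(\eps^2+|x'-z'|^2+(x_N-z_N)^2)^N}=\frac{C\eps^N}{(\eps^2+|x'-z'|^2)^{N-1/2}}.
\]
The crude but $z$-uniform lower bound $(\eps^2+|x'-z'|^2)^{N-1/2}\geq \eps^{2N-1}$ then makes the resulting integrand in $x'$ no larger than $C/\eps^{N-1}$, so integrating over the $(N-1)$-ball $\{|x'|\leq 2\delta\}$ gives $E_1\leq C\delta^{N-1}/\eps^{N-1}\leq C\delta^{N-1}/\eps^N$ for $\eps\leq 1$, which is the claimed rate.

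The main subtlety I anticipate is the uniformity of the estimate over $z\in \mathbb{S}_R^{N-1}$: the hardest configurations are those with $|z'|$ small, i.e.\ $z$ close to the polar segment $\{(0,x_N):x_N\geq 0\}$ that is excluded from $\Omega$, because in that regime the bulk of $U_{\eps,z}$ sits near the cylindrical cut-off and a non-negligible amount of mass is destroyed by $\omega_\delta$. The crude bound $(\eps^2+|x'-z'|^2)^{N-1/2}\geq \eps^{2N-1}$ is chosen precisely because it does not deteriorate as $|z'|\to 0$; sharper $z$-dependent estimates exploiting $|x'-z'|\gtrsim|z'|$ when $|z'|\gg\delta$ are certainly available and would yield a faster rate, but they are not needed for \eqref{stima3} and would only complicate the argument.
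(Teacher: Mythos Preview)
Your proof is correct and follows essentially the same approach as the paper: split the mass loss into the contribution from the cylindrical cutoff $\omega_\delta$ and the tail contribution from $\psi$, then bound each piece separately using the explicit form of $U_{\eps,z}^{2^*}$. The only minor difference is that the paper bounds the cylinder term more crudely via $\U^{2^*}\le C\eps^{-N}$ on $B_\rho(z)\cap\{\omega_\delta<1\}$ (whose volume is $\le C\delta^{N-1}$), obtaining $C\delta^{N-1}/\eps^N$ directly, whereas your $x_N$-integration first yields the slightly sharper $C\delta^{N-1}/\eps^{N-1}$ before you weaken it; either way the stated estimate follows.
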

\begin{proof}
Since by \eqref{energyT}
\[
\U^{2^*}(x)=d_{N, s}^{2^*}\left(\frac{\eps}{\eps^2+|x-z|^2}\right)^N,\qquad \int_{\R^N} \U^{2^*}\, dx=\frac{N}{s}c_\infty,
\]
we have
\[
\begin{split}
\int_{\R^N} \U^{2^*}\,dx-\int_{\R^N} \u^{2^*}\,dx&\leq
\int_{B_{\rho}(z)} \U^{2^*}(1-\omega_\delta)\, dx+\int_{\C B_{\rho}}\U^{2^*}\, dx\\
&\leq\frac{C}{\eps^N}|B_{\rho}(z)\cap \{\omega_\delta<1\}|+C\eps^N\int_{\C B_\rho(z)}\frac{1}{|x-z|^{2N}}\, dx\\
&\leq C\frac{\delta^{N-1}}{\eps^N}+C\eps^N.
\end{split}
\]
This concludes the proof.
\end{proof}

Finally, we show how to modify the previous proofs to cater with the borderline case $N=2$, $s=1/2$.

\begin{lemma}
For any $\theta\in\ ]0,1[$ there exist $R_\theta>1$ and a function $\eta_\theta\in C^\infty_c(\R)$ such that
\begin{equation}
\label{propetazero}
\eta_\theta(x)=1\quad \text{if $|x|\leq 1$}, \qquad \eta_\theta(x) =0\quad \text{in $|x|\geq R_\theta$}, \qquad 0\leq \eta_\theta\leq 1
\end{equation}
and
\begin{equation}
\label{propeta}
[\eta_\theta]_{1/2}^2\leq \frac{C}{|\log\theta|}.
\end{equation}
\end{lemma}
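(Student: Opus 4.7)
The key idea is that the $\dot H^{1/2}(\R)$-capacity of a point is only logarithmically nontrivial, so a profile that is harmonic in the $\log$-coordinate realises a near-optimal cutoff. The plan is to first construct a continuous, compactly supported competitor, estimate its seminorm by hand, and then mollify. Fix a parameter $R>4$ to be chosen later and set
\[
\tilde\eta(x)=\begin{cases} 1, & |x|\leq 2,\\[2pt] \dfrac{\log(R/|x|)}{\log(R/2)}, & 2\leq |x|\leq R,\\[2pt] 0, & |x|\geq R;\end{cases}
\]
this is even, continuous, compactly supported, and is the piecewise-harmonic interpolation in logarithmic coordinates.

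Next I estimate $[\tilde\eta]_{1/2}^2$ by splitting $\R\times\R$ according to where $\tilde\eta$ is constant or linear in $\log|\cdot|$. The contributions from the plateau $|x|,|y|\leq 2$ and from $|x|,|y|\geq R$ vanish. The dominant contribution comes from pairs with $2\leq x,y\leq R$ (other sign combinations treated by symmetry); substituting $u=\log x$, $v=\log y$ and using the identity $(e^u-e^v)^2=4e^{u+v}\sinh^2((u-v)/2)$, this term becomes
\[
\frac{1}{4\log^2(R/2)}\iint_{[\log 2,\log R]^2}\frac{(u-v)^2}{\sinh^2((u-v)/2)}\,du\,dv.
\]
Since $t^2/\sinh^2(t/2)$ is uniformly bounded and decays exponentially, Fubini (in the variable $w=u-v$) yields a bound $C\log R/\log^2(R/2)\leq C/\log R$ for $R$ large. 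The cross terms (plateau against transition annulus, transition annulus against exterior) reduce, via the same logarithmic substitution, to absolutely convergent one-variable integrals of kernels like $t^2 e^{-t}/(1-e^{-t})$, contributing only $O(\log^{-2} R)$; the plateau-to-exterior term is $O(1/R)$. Summing gives $[\tilde\eta]_{1/2}^2\leq C/\log R$ for $R$ sufficiently large.

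Finally, convolve $\tilde\eta$ with a smooth probability kernel $\rho\in C^\infty_c(\R)$ supported in $(-1,1)$ to produce $\eta_\theta=\tilde\eta\ast\rho\in C^\infty_c(\R)$. Since $\tilde\eta\equiv 1$ on $[-2,2]$, one has $\eta_\theta\equiv 1$ on $[-1,1]$, and $\eta_\theta$ vanishes outside $[-R-1,R+1]$. Jensen's inequality (equivalently, $|\widehat\rho|\leq 1$ on the Fourier side) gives $[\eta_\theta]_{1/2}\leq [\tilde\eta]_{1/2}$; setting $R_\theta=R+1$ and choosing $R$ so that $\log R\geq |\log\theta|$ (e.g.\ $R\asymp 1/\theta$ for $\theta$ small) yields \eqref{propetazero}--\eqref{propeta}.

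The main technical point, and the only genuinely computational step, is the identity $(e^u-e^v)^2=4e^{u+v}\sinh^2((u-v)/2)$: it is precisely what converts the critical non-translation-invariant kernel $|x-y|^{-2}$ on $[2,R]$ into a translation-invariant kernel on $[\log 2,\log R]$ whose total mass on a window of length $\log R$ is exactly $O(\log R)$. This logarithmic growth, divided by the normalization $\log^{-2}R$, produces the $C/|\log\theta|$ rate; the same calculation is the obstruction that prevents the construction from working when $s>1/2$ in dimension $N=2$, since then the analogous kernel on the annulus has a non-integrable singularity at the diagonal.
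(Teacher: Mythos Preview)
Your proof is correct and takes a genuinely different route from the paper's. The paper does not construct an explicit competitor at all: it simply quotes the known estimate $B_{1/2,2}([-\theta,\theta])\leq C/|\log\theta|$ for the Bessel capacity of an interval (Ziemer), and then, via the Fourier characterisation $[\eta]_{1/2}^2=\int|\xi|\,|\mathcal F\eta|^2\,d\xi\leq \int(1+|\xi|^2)^{1/2}|\mathcal F\eta|^2\,d\xi$ together with density of $C^\infty_c$ in $H^{1/2}$, bounds the infimum of $[\eta]_{1/2}^2$ over admissible cutoffs by this capacity. Your approach instead builds the logarithmic profile explicitly, reduces the critical kernel $|x-y|^{-2}$ to the translation-invariant kernel $\sinh^{-2}((u-v)/2)$ via $u=\log x$, and estimates by hand. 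The paper's argument is shorter but externalises the work to a capacity theorem; yours is self-contained, produces a concrete $\eta_\theta$, and makes transparent exactly where the $1/|\log\theta|$ rate comes from (the $\sinh$ identity giving an integrable kernel on a window of length $\log R$). One small remark: the opposite-sign transition--transition contribution ($x\in[2,R]$, $y\in[-R,-2]$) is not a ``symmetry'' of the same-sign case, but the same substitution with $\cosh$ in place of $\sinh$ handles it and still gives $O(1/\log R)$, so the final bound is unaffected.
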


\begin{proof}
It follows from the property (see \cite[Theorem 2.6.14]{Ziemer}) of the Bessel capacity of intervals
\begin{equation}
\label{capacity}
B_{1/2, 2}([-\theta, \theta])=\inf\{\|u\|_2^2: u\geq 0, \text{ $g_{1/2}* u\geq 1$ on $[-\theta, \theta]$}\}\leq \frac{C}{|\log\theta|},
\end{equation}
where $g_{1/2}$ is the Bessel potential in $\R$ (so that ${\cal F}(g_{1/2})(\xi)=(2\pi)^{-1/2}(1+|\xi|^2)^{-1/4}$). Recall (see \cite[Proposition 4, sec. 3.5]{Stein}) that $\eta \in H^{1/2}(\R)$ if and only if $\eta=g_{1/2}* u$ for some $u\in L^2(\R)$. The density of $C^\infty_c(\R)$ in $H^{1/2}(\R)$, the lattice property of the latter and \eqref{Ftransform}  imply
\[
\begin{split}
\inf\{[\eta]^2_{1/2}:\eta\in C^\infty_c(\R), \eta\geq \chi_{[-\theta, \theta]}\}&=
\inf\{[\eta]^2_{1/2}:\eta\in H^{1/2}(\R), \text{$\eta\geq 1$ on $[-\theta, \theta]$}\}\\
&=\inf\{\int_{\R^N} |\xi||\cal F(\eta)|^2\, d\xi: \eta\in H^{1/2}(\R), \text{$\eta\geq 1$ on $[-\theta, \theta]$}\}\\
&\leq 
\inf\{\int_{\R^N}(1+|\xi|^2)^{1/2}|\cal F(\eta)|^2\, d\xi: \eta\in H^{1/2}(\R), \text{$\eta\geq 1$ on $[-\theta, \theta]$}\}\\
&\leq C\inf\{\|u\|_2^2: \text{$g_{1/2}* u\geq 1$ on $[-\theta, \theta]$}\}\leq CB_{1/2, 2}([-\theta, \theta]),
\end{split}
\]
which together with \eqref{capacity} gives the claim.
\end{proof}

Let us define for any $1>\theta>\lambda>0$,  the function $\omega_{\theta, \lambda}\in C^\infty(\R)$
\[
\omega_{\theta, \lambda}(x_1)=1-\eta_\theta(\frac{x_1}{\lambda}),
\]
and for $\rho, R>0$ and $z\in {\mathbb S}^{1}_R$, $x=(x_1, x_2)\in \R^2$ we define, similarly to the beginning of the section
\[
u_{\theta, \lambda, \eps, z}(x)= \omega_{\theta, \lambda}(x_1)\psi(x-z)\U(x).
\]

\begin{proposition}
\label{propbis}
Let $N=2$, $s=1/2$. There exists $C_1$ such that if $1>\eps>\theta>\lambda>0$ and $z\in \mathbb{S}^{N-1}_R$ it holds
\begin{equation}
\label{propbisone}
[u_{\theta, \lambda, \eps, z}]_{s}^2\leq \frac{N}{s}c_\infty +\frac{C_1}{|\log\theta|\eps^{N-2s}}+o(1),
\end{equation}
\begin{equation}
\label{propbistwo}
\|u_{\theta,\lambda,\eps, z}\|_{2^*}^{2^*}\geq \frac{N}{s}c_\infty -C_1\frac{\lambda R_\theta}{\eps^N}-o(1),
\end{equation}
where $o(1)\to 0$ for $\eps \to 0$ independently of $\theta$ and $\lambda$.
\end{proposition}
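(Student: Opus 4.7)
The plan is to adapt the proofs of Propositions \ref{one} and \ref{two} to the product-type cutoff $\eta(x) = \omega_{\theta,\lambda}(x_1)\psi(x-z)$ in place of $\omega_\delta(x)\psi(x-z)$. The main obstacle to overcome is that when $N=2$, $s=1/2$ the previous scaling computation for the seminorm of $\omega_\delta$ produces a non-vanishing factor $\delta^{N-1-2s}=1$, so a naive mimicry fails. The key new ingredient is the preceding lemma, which furnishes a cutoff $\eta_\theta$ of the unit interval with $[\eta_\theta]_{1/2}^2\leq C/|\log\theta|$; combined with the one-dimensional scale invariance of $[\cdot]_{1/2}$, this lets the rescaled cutoff $\eta_\theta(\cdot/\lambda)$ retain its logarithmic smallness regardless of $\lambda$. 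The proof therefore decouples the two parameters: $\theta$ governs the $H^{1/2}$ loss in \eqref{propbisone}, while $\lambda$ independently governs the $L^{2^*}$ loss in \eqref{propbistwo}.

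For \eqref{propbisone} I would first rewrite $[u_{\theta,\lambda,\eps,z}]_s^2$ as in Proposition \ref{one}, splitting off an $I_1$-type integral against $\U(-\Delta)^s\U = \U^{2^*}$ (bounded by $(N/s)c_\infty$ using $0\leq \eta\leq 1$) plus a symmetric commutator integral containing $(\eta(x)-\eta(y))^2\U(x)\U(y)/|x-y|^{N+2s}$. The product structure of $\eta$ bounds this commutator by the sum of a $\psi$-term, treated verbatim as in Proposition \ref{one} and contributing $o(1)$ as $\eps\to 0$, plus an $\omega_{\theta,\lambda}$-term. In the latter, after estimating $\U(x)\U(y)\psi^2(x-z)\leq (d_{N,s}^2/\eps)h^2(x_2-z_2)$ for a suitable scalar bump $h\geq \psi(\cdot,\cdot-z_2)$, Fubini and the substitution $t=(x_2-y_2)/(x_1-y_1)$ carried out in \eqref{omegadelta} reduce the contribution (using $N+2s-1=N-1+2s=2$) to
\begin{equation*}
\frac{C}{\eps}\int_{\R^2}\frac{(\omega_{\theta,\lambda}(x_1)-\omega_{\theta,\lambda}(y_1))^2}{|x_1-y_1|^{2}}\,dx_1\,dy_1 = \frac{C}{\eps}\,[\eta_\theta(\cdot/\lambda)]_{1/2}^2.
\end{equation*}
Since $[\cdot]_{1/2}$ on $\R$ is scale invariant, $[\eta_\theta(\cdot/\lambda)]_{1/2}^2 = [\eta_\theta]_{1/2}^2\leq C/|\log\theta|$ by \eqref{propeta}, yielding the claimed $C/(|\log\theta|\,\eps^{N-2s})$ term.

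For \eqref{propbistwo} I would proceed exactly as in Proposition \ref{two}, bounding the deficit
\begin{equation*}
\int_{\R^N}\U^{2^*}\,dx-\int_{\R^N} u_{\theta,\lambda,\eps,z}^{2^*}\,dx\leq \int_{B_\rho(z)}\U^{2^*}\bigl(1-\omega_{\theta,\lambda}^{2^*}\bigr)\,dx+\int_{\C B_\rho(z)}\U^{2^*}\,dx.
\end{equation*}
The second integral is $O(\eps^N)=o(1)$ by direct computation. In the first integrand, $1-\omega_{\theta,\lambda}^{2^*}$ vanishes outside the strip $\{|x_1|<\lambda R_\theta\}$, whose intersection with $B_\rho(z)$ has Lebesgue measure at most $C\lambda R_\theta$; combined with the pointwise bound $\U^{2^*}\leq C/\eps^N$, this produces the stated $C\lambda R_\theta/\eps^N$ term and completes the argument. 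Throughout, the delicate point is the independence of $\theta$ and $\lambda$: the scale invariance at $s=1/2$ is what allows $\lambda$ to be sent to $0$ in \eqref{propbistwo} without disturbing the $|\log\theta|^{-1}$ gain in \eqref{propbisone}.
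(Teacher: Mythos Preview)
Your proposal is correct and follows essentially the same route as the paper's proof: rerun Proposition~\ref{one} with $\omega_{\theta,\lambda}$ in place of $\omega_\delta$, observe that the only change occurs at the last step of \eqref{omegadelta} where the one-dimensional scale invariance of $[\cdot]_{1/2}$ kills the $\lambda$-dependence and \eqref{propeta} provides the $|\log\theta|^{-1}$ bound, then for \eqref{propbistwo} repeat Proposition~\ref{two} using $|B_\rho(z)\cap\{\omega_{\theta,\lambda}<1\}|\leq C\lambda R_\theta$. The paper's proof is more terse but the content is identical.
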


\begin{proof}
Regarding \eqref{propbisone} we can repeat the proof of Proposition \ref{one}. Since the only thing we are changing is the use of $\omega_{\theta, \lambda}$ instead of $\omega_\delta$, it suffices to focus on the last inequality in \eqref{omegadelta}, where in this case $N-1+2s=2$. By scaling
\[
\int_{\R^2}\frac{(\omega_{\theta, \lambda}(x_1)-\omega_{\theta, \lambda}(y_1))^2}{|x_1-y_1|^2}\, dx_1\, dy_1=\int_{\R^2}\frac{(\omega_{\theta,1}(x_1)-\omega_{\theta, 1}(y_1))^2}{|x_1-y_1|^{2}}\, dx_1\, dy_1=[\eta_\theta]^2_{1/2}
\]
and \eqref{propeta} gives \eqref{propbisone}. To obtain \eqref{propbistwo}, we use scaling and \eqref{propetazero} to get 
\[
|B_\rho(z)\cap \{\omega_{\theta, \lambda}<1\}|\leq C\lambda R_\theta,
\]
and proceed as in the proof of Proposition \ref{two}.
\end{proof}

\section{Existence}
\label{Existence}
\noindent
In the following we shall assume that
\begin{equation}
\label{assumption}
\text{there is no critical point for $I_\Omega$ on ${\cal N}_+(\Omega)$ at a level $c\in [c_\infty, 2c_\infty]$,}
\end{equation}
and that $\Omega\subseteq B_{R_3}\setminus B_{R_0}$. For any $u\in L^{2^*}(\R^N)\setminus \{0\}$ we define its barycenter as
\[
\beta(u)=\frac{\int_{B_{R_3}} x |u|^{2^*}\, dx}{\int_{\R^N} |u|^{2^*}\, dx}.
\]
Clearly $\beta:L^{2^*}(\R^N)\setminus\{0\}\to \R^N$ is a continuous function w.r.t. the strong topology, and as long as $\beta(u)\neq 0$ we can define
\[
\bar\beta (u)=\frac{\beta (u)}{|\beta(u)|}.
\]

\begin{lemma}
\label{beta}
There exists $\eps_0=\eps_0(N, s, R_0, R_3)>0$  such that for any $\Omega\subseteq B_{R_3}\setminus B_{R_0}$
\[
I_\Omega(u)\leq c_\infty+\eps_0\quad u\in {\cal N}(\Omega)\quad  \Rightarrow \quad |\beta (u)|\geq \frac{R_0}{2}.
\]
\end{lemma}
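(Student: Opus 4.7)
The plan is to argue by contradiction. Suppose no such $\eps_0$ exists; then for each $n\in\N$ there are $\Omega_n\subseteq B_{R_3}\setminus B_{R_0}$ and $u_n\in \mathcal N(\Omega_n)$ with $I_{\Omega_n}(u_n)\le c_\infty+1/n$ and $|\beta(u_n)|< R_0/2$. Since $u_n\in \mathcal N(\R^N)$ and $I(u_n)=I_{\Omega_n}(u_n)\to c_\infty=\inf_{\mathcal N(\R^N)} I$ by \eqref{defcinfty2}, the sequence $\{u_n\}$ is minimizing on $\mathcal N(\R^N)$ at level $c_\infty$. Normalize by $v_n:=u_n/\|u_n\|_{2^*}$, so that $\|v_n\|_{2^*}=1$ and, using $[u_n]_s^2=\|u_n\|_{2^*}^{2^*}$ together with \eqref{cinfty}, $[v_n]_s^2\to S(N,s)$. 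Thus $\{v_n\}$ is a minimizing sequence for the sharp Sobolev inequality, and the probability measures $\mu_n:=|v_n|^{2^*}dx$ are supported in the compact annulus $\bar B_{R_3}\setminus B_{R_0}$. By Prokhorov, along a subsequence $\mu_n\weakto\mu$ weakly-$*$, with $\mu$ a probability measure on $\bar B_{R_3}\setminus B_{R_0}$.

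The crux is to show that $\mu=\delta_{z_0}$ for some $z_0\in \bar B_{R_3}\setminus B_{R_0}$. Here I would invoke the profile decomposition: Ekeland's variational principle produces a Palais-Smale sequence $\tilde u_n\in \mathcal N(\R^N)$ with $I(\tilde u_n)\to c_\infty$ and $\tilde u_n - u_n\to 0$ in $X$. Apply the decomposition theorem of \cite{pal-pis2} (cf.\ Proposition \ref{PalaisSmale} in the case $\Omega=\R^N$): modulo a subsequence, $\tilde u_n=u^{(0)}+\sum_{j=1}^m u^{(j)}_n$ with $c_\infty=I(u^{(0)})+\sum_j I(u^{(j)})$, where $u^{(0)}$ is a critical point of $I$ and each nontrivial profile $u^{(j)}$ is a nontrivial critical point in $\R^N$, hence a translate/dilate of $\pm U_{1,0}$ carrying energy $c_\infty$. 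The identity for $c_\infty$ forces either (i) $u^{(0)}$ is a Talentian and $m=0$, or (ii) $u^{(0)}\equiv 0$ and there is exactly one bubble. Option (i) is impossible since the weak limit of functions supported in $\bar B_{R_3}$ is supported in $\bar B_{R_3}$, whereas no Talentian is compactly supported; so (ii) holds and, denoting the bubble parameters by $(\lambda_n,x_n)$, compact support of $v_n$ forces $\lambda_n\to 0$ (otherwise the Talentian's tails would carry nonvanishing mass outside $B_{R_3}$) and $\{x_n\}$ bounded. Moreover $\dist(x_n,B_{R_0})\to 0$ cannot fail by more than $o(1)$, otherwise the concentrating bubble would place nonvanishing $L^{2^*}$-mass inside $B_{R_0}$, where $v_n$ vanishes. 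A further subsequence gives $x_n\to z_0\in \bar B_{R_3}\setminus B_{R_0}$ and $\mu_n\weakto \delta_{z_0}$.

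To conclude, pick any $\varphi\in C_c(\R^N,\R^N)$ equal to the identity on $\bar B_{R_3}$; weak-$*$ convergence of $\mu_n$ then gives
\[
\beta(v_n)=\int_{\R^N}\varphi(x)\,d\mu_n\longrightarrow \int_{\R^N}\varphi(x)\,d\delta_{z_0}=z_0,
\]
so that $|\beta(u_n)|=|\beta(v_n)|\to |z_0|\ge R_0>R_0/2$, contradicting the standing assumption $|\beta(u_n)|<R_0/2$. Uniformity of $\eps_0$ in $\Omega\subseteq B_{R_3}\setminus B_{R_0}$ is automatic from the contradiction setup, since we never used anything about $\Omega_n$ beyond the inclusion.

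The main obstacle is the concentration step: one must rule out that the minimizing sequence splits into several pieces (dichotomy) and show that the single Dirac is attained at a point of $\bar B_{R_3}\setminus B_{R_0}$. This uses both the uniqueness up to scaling/translation of Sobolev extremals (leaving only one bubble) and the incompatibility of compact support with entire Talentians (eliminating a nonzero weak limit and forcing $\lambda_n\to 0$ with $x_n$ bounded and far from the origin).
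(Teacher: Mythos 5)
Your argument is correct and follows essentially the same strategy as the paper: argue by contradiction, upgrade the almost-minimizing sequence to a Palais--Smale sequence via Ekeland, invoke the Palatucci--Pisante profile decomposition, reduce to a single concentrating bubble, and contradict $|\beta(u_n)|<R_0/2$ through the convergence of the barycenter to the concentration point. The one substantive difference is the ambient space: the paper runs Ekeland and the global compactness theorem inside the fixed annulus $A=\{R_0<|x|<R_3\}$, so that \cite[Theorem 1.1]{pal-pis2} (stated for bounded smooth domains, and quoted here as Proposition \ref{PalaisSmale}) applies verbatim, the concentration points $z_n$ automatically lie in $\bar A$, and the possible half-space profiles are excluded by Corollary \ref{nosol} while sign-changing ones are excluded by \cite[Lemma 2.5]{SSS}; you instead work on all of $\R^N$, which buys you a cleaner exclusion of a nontrivial weak limit (compact support of the limit of $u_n$ versus non-compactly-supported Talentians, with no need of the ``$c_\infty$ is not a critical level'' step) and removes half-space profiles from the picture, but costs you two things: (a) the decomposition you cite, ``Proposition \ref{PalaisSmale} in the case $\Omega=\R^N$'', is not literally covered by that proposition, so you must either quote the whole-space profile decomposition of \cite{pal-pis2} directly or restrict to ${\cal N}(A)$ as the paper does; and (b) you have to localize the bubble parameters by hand ($\lambda_n\to 0$, $x_n$ bounded and outside $B_{R_0}$ up to $o(1)$) from the support constraint, a step the paper gets for free because the theorem already places $z_n\in A$. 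Your sketch of (b) is sound (the support constraint indeed rules out spreading, escape to infinity, and concentration in the hole), and your implicit use of $[u_n-\tilde u_n]_s\to 0$ to transfer the support information from $u_n$ to the Ekeland sequence should be made explicit, but neither point is a genuine gap.
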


\begin{proof}
Suppose not and let $A=\{x\in \R^N: R_0<|x|<R_3\}$. Then there exists a sequence $\{\Omega_n\}$ such that $\Omega_n\subseteq A$ and $\{u_n\}\subset {\cal N}(\Omega_n)$ such that $I_{\Omega_n}(u_n)\leq c_\infty+\frac{1}{n^2}$ and $|\beta(u_n)|<R_0/2$. Since ${\cal N}(\Omega_n)\subseteq {\cal N}(A)$, by Ekeland's Variational principle \cite[Proposition 5.1]{Ekeland}, we can pick a sequence $\{v_n\}\subseteq {\cal N}(A)$ such that 
\[
[v_n-u_n]_s\leq \frac 1 n, \qquad I_A(v_n)\leq c_\infty+\frac 1 n, \qquad [\nabla_{\cal N}I_A(v_n)]_s\leq \frac 1 n,
\]
where the norms are taken in $X_A$.
By proposition \ref{Nehariproperties}, $\{v_n\}$ is a PS sequence for $I_A:X_A\to \R$ at level $c_\infty$, thus by \cite[Theorem 1.1]{pal-pis2} the profile decomposition \eqref{profiledec}--\eqref{profiledec3} holds true for some $v^{(0)}\in X_A$, $v^{(j)}\in X_{\Omega^{(j)}}$, $j=1,\dots, m$. Since $c_\infty$ is not a critical level, $v^{(0)}=0$, and by \cite[Lemma 2.5]{SSS} no $v^{(j)}$ can be sign-changing. Using also Corollary \ref{nosol} we obtain that $m=1$ and $v^{(1)}= U_{\eps, z}$ for some $z\in \R^N$, $\eps>0$. Therefore \cite[Theorem 1.1, (1.6)]{pal-pis2} ensures that there exist $\eps_n>0$, $z_n\in A$ such that 
\[
[v_n-U_{\eps_n, z_n}]_s\to 0 \quad \text{in $\dot H^s(\R^N)$},
\]
where $\eps_n\to 0$ (since $U_{\eps, z}\notin X_A$). Suppose, without loss of generality, that $z_n\to z\in \bar A$. By scaling and \eqref{energyT}
\[
U_{\eps_n, z_n}^{2^*}\weakto \frac N s c_\infty \delta_z\qquad \text{as $\eps_n\to 0$},
\]
in the sense of measures. We claim that $\beta(v_n)\to z\in \bar A$ as $n\to +\infty$: since it holds $\|v_n-U_{\eps_n, z_n}\|_{2^*}\to 0$  by Sobolev embedding, and  $\|U_{\eps_n, z_n}\|_{2^*}^{2^*}\equiv\frac N s c_\infty$ this follows from
\[
\int_{B_{R_3}}x |v_n|^{2^*}\, dx =\int_{B_{R_3}} x(|v_n|^{2^*}-U_{\eps_n, z_n}^{2^*})\, dx+\int_{B_{R_3}} x U_{\eps_n, z_n}^{2^*}\, dx\to \frac N s c_\infty z.
\]
 However from $\|u_n-v_n\|_{2^*}\to 0$ we deduce $|\beta(v_n)-\beta(u_n)|\to 0$ and so, by our assumption, $|z|\leq R_0/2$, which  is a contradiction with $z\in \bar A$.
\end{proof}

\begin{lemma}
\label{varphi}
Let $N\geq 3$ and $s\in \ ]0,1[$ or $N=2$ and $s\in \ ]0, \frac1 2]$. For any $\bar \eps>0$ , there exists $\delta(\bar\eps, N, s, R_1, R_2)>0$ such that to any $\Omega\subseteq B_{R_3}$ satisfying \eqref{thin-tunnel2} there corresponds $\varphi:\mathbb{S}^{N-1}\to {\cal N}_+(\Omega)$ with the following properties:
\begin{equation}
\label{energy}
I_\Omega(\varphi(x))\leq c_\infty+\bar\eps \qquad \forall x\in \mathbb{S}^{N-1};
\end{equation}
\begin{equation}
\label{degree}
0\notin \beta(\varphi(\mathbb{S}^{N-1})),\qquad |\bar\beta(\varphi(x))-x|\leq 1\qquad \forall x\in \mathbb{S}^{N-1}.
\end{equation}
\end{lemma}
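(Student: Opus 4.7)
\emph{Proof plan.} Set $R = (R_1+R_2)/2$ and write $z = Rx$ for $x \in \mathbb{S}^{N-1}$. The natural candidate is
\[
\varphi(x) := \mathcal{T}(u_{\delta,\eps,z}),
\]
with $u_{\delta,\eps,z}$ (or $u_{\theta,\lambda,\eps,z}$ in the borderline case $N = 2$, $s = 1/2$) the truncated Talentian of Section~\ref{Estimates}. The small parameters will be chosen in the order $\rho$, then $\eps$, then $\delta$ (resp.\ $\theta$, then $\lambda$), with $\rho$ depending only on $R_1, R_2$ and the remaining parameters also on $\bar\eps$ (and $N, s$). I fix $\rho := \tfrac{1}{4}\min\{R_2 - R_1, R\}$ and use the bump functions $\psi, \omega$ introduced at the beginning of Section~\ref{Estimates}.

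For each $z \in \mathbb{S}^{N-1}_R$ the support of $u_{\delta,\eps,z}$ is contained in $B_{2\rho}(z) \cap \{|y'| \ge \delta\}$, which by the choice of $\rho$ sits inside $\{R_1 \le |y| \le R_2\} \setminus \{|y'|<\delta,\, y_N \ge 0\}$, hence inside $\Omega$ by hypothesis \eqref{thin-tunnel2}. Thus $u_{\delta,\eps,z} \in X_\Omega$; it is nonnegative and not identically zero as long as $\delta < \rho$, so $\varphi(x) = \mathcal{T}(u_{\delta,\eps,z})$ is a well-defined continuous map from $\mathbb{S}^{N-1}$ into $\mathcal{N}_+(\Omega)$. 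The projection formula gives
\[
I_\Omega(\mathcal{T}(u)) \;=\; \frac{s}{N}\Big(\frac{[u]_s^2}{\|u\|_{2^*}^2}\Big)^{N/(2s)},
\]
a continuous function of the pair $([u]_s^2, \|u\|_{2^*}^{2^*})$ taking the value $c_\infty$ at $(\tfrac{N}{s}c_\infty, \tfrac{N}{s}c_\infty)$. I first pick $\eps$ so small (uniformly in $z$) that the $o(1)$ errors in Propositions~\ref{one} and \ref{two} (resp.\ \ref{propbis}) are below a prescribed fraction of $\bar\eps$, and then $\delta$ (resp.\ $\theta, \lambda$) so small that the algebraic errors $C_1 \delta^{N-1-2s}/\eps^{N-2s}$ and $C_2 \delta^{N-1}/\eps^N$ (resp.\ $1/|\log\theta|$ and $\lambda R_\theta/\eps^N$) are below the same threshold. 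Plugging the resulting near-optimal bounds into the projection formula yields $I_\Omega(\varphi(x)) \le c_\infty + \bar\eps$ uniformly in $x$, after a harmless renaming of $\bar\eps$.

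For the barycenter, $\mathcal{T}$ is a positive rescaling, hence $\beta(\varphi(x)) = \beta(u_{\delta,\eps,z})$; the localization of the support in $B_{2\rho}(z)$ yields at once $|\beta(\varphi(x)) - z| \le 2\rho$, whence $R_1 \le |\beta(\varphi(x))| \le R_2$ and $0 \notin \beta(\varphi(\mathbb{S}^{N-1}))$. A direct use of the reverse triangle inequality on $|z|=R$ gives
\[
|\bar\beta(\varphi(x)) - x| \;=\; \tfrac{1}{R}\big|R\tfrac{\beta}{|\beta|} - z\big| \;\le\; \tfrac{1}{R}\big(|R - |\beta|| + |\beta - z|\big) \;\le\; \tfrac{2}{R}|\beta - z| \;\le\; \tfrac{4\rho}{R} \;\le\; 1
\]
by the choice of $\rho$.

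The genuinely delicate point throughout is the \emph{uniformity} of the energy estimate in $z \in \mathbb{S}_R^{N-1}$, specifically for $z$'s close to the removed half-axis, where $U_{\eps,z}$ concentrates precisely in the forbidden region and the cutoff has to erase the bulk of the concentration; this is exactly what Propositions~\ref{one}--\ref{two} and \ref{propbis} have been engineered to control. In the borderline case $N=2$, $s=1/2$ everything rests on the logarithmic bound for the Bessel capacity of an interval, which is also what forces the dimensional constraint $N-2s \ge 1$ in Theorem~\ref{mainthm2}.
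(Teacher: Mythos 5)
Your proposal is correct and follows essentially the same route as the paper: the same truncated Talentians projected via $\mathcal{T}$ onto $\mathcal{N}_+(\Omega)$, the same use of Propositions \ref{one}, \ref{two} and \ref{propbis} together with the formula $I_\Omega(\mathcal{T}(u))=\frac{s}{N}\bigl([u]_s^2/\|u\|_{2^*}^2\bigr)^{N/(2s)}$ (your choice of $\delta$, resp. $\theta,\lambda$, after $\eps$ replaces the paper's explicit scaling $\delta=\eps^\alpha$ to the same effect), and the same support-localization argument for the barycenter, with a slightly cleaner triangle inequality allowing $\rho\le R/4$ instead of $10\rho<R$. The only point you leave implicit is that in the borderline case $N=2$, $s=1/2$ the $\delta$ of the statement must be fixed with $\delta\le\lambda$ (the paper takes $\delta=\lambda\theta$), so that the support of $u_{\theta,\lambda,\eps,z}$ avoids $\{|x'|<\delta,\ x_N\ge 0\}$ and hence \eqref{thin-tunnel2} guarantees membership in $X_\Omega$.
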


\begin{proof}
First we let, from \eqref{thin-tunnel}, $R=(R_1+R_2)/2$ and 
\[
\rho<\min\Big\{\frac{R}{10}, \frac{R_2-R_1}{2}\Big\}.
\]
Consider first the case $N-1-2s>0$ (i.e. $N\geq 3$ and $s\in \ ]0,1[$ or $N=2$ and $s\in \ ]0,1/2[$). For $z\in \mathbb{S}^{N-1}_R$ the functions $\u$ constructed in the previous section belong to $X_\Omega$, as soon as $\Omega$ satisfies \eqref{thin-tunnel2}. Without loss of generality, we can assume $1\gg \eps>0$ and set
\[
\delta=\eps^\alpha,\qquad \text{for some }\quad\alpha>\frac{N-2s}{N-1-2s}>\frac{N}{N-1}>1.
\]
For such a choice, \eqref{stima1} and \eqref{stima3} read
\begin{equation}
\label{estimatesenergy}
[u_{\delta, \eps, z}]_s^2\leq \frac{N}{s}c_\infty+o(1),\qquad \|u_{\delta, \eps, z}\|_{2^*}^{2^*}\geq \frac{N}{s}c_\infty-o(1),\qquad \forall z\in \mathbb{S}^{N-1}_R.
\end{equation}
In the case $N=2$, $s=1/2$ we instead use Proposition \ref{propbis}. First we choose $\theta=e^{-\eps^{-\alpha}}$, $\alpha>1$ and then $\lambda>0$ such that $\lambda=\eps^{1+\alpha}/R_\theta$. Then \eqref{propbisone} and \eqref{propbistwo} provide \eqref{estimatesenergy} for $u_{\theta, \lambda, \eps, z}$.
Let us call, for $\delta, \theta, \lambda$ depending on $\eps$ as before,
\[
u_{\eps, z}=
\begin{cases}
u_{\delta, \eps, z} &\text{if $N-1-2s>0$},\\
u_{\theta, \lambda, \eps, z}&\text{if $N=2$ and $s=1/2$},
\end{cases}
\qquad 
\delta=
\begin{cases}
\eps^\alpha&\text{if $N-1-2s>0$},\\
\lambda\theta&\text{if $N=2$, $s=1/2$},
\end{cases}
\]
and define, for any  $x\in \mathbb{S}^{N-1}$
\[
\varphi(x)={\cal T}(u_{\eps, Rx})\in {\cal N}_+(\Omega).
\]
Since 
\[
I_\Omega(\varphi(x))=\frac{s}{N}\left(\frac{[u_{\eps, Rx}]_s^2}{\|u_{\eps, Rx}\|_{2^*}^2}\right)^{\frac{N}{2s}}
\leq \frac{s}{N}\left(\frac{\frac{N}{s}c_\infty+o(1)}{(\frac{N}{s}c_\infty-o(1))^{2/2^*}}\right)^{\frac{N}{2s}}= c_\infty+o(1)
\]
we have that \eqref{energy} holds for sufficiently small $\eps$ (and thus $\delta$). To prove \eqref{degree} observe that, for any $z\in \mathbb{S}^{N-1}_R$, $u_{\eps, z}$ is supported in $B_{2\rho}(z)$, therefore its barycenter lies in $B_{2\rho}(z)$, and in particular is nonzero, being $2\rho<R$. Since $\beta({\mathcal T}(u_{\eps, z}))=\beta(u_{\eps, z} )$, it holds 
\[
|\beta(\varphi(x))-Rx|\leq 2\rho,
\]
which implies 
\[
\begin{split}
\left|\frac{\beta(\varphi(x))}{|\beta(\varphi(x))|}-x\right|&\leq \left|\frac{\beta(\varphi(x))}{|\beta(\varphi(x))|}-\frac{\beta(\varphi(x))}{R}\right|
+\left|\frac{\beta(\varphi(x))}{R}-x\right|\\
&\leq |\beta(\varphi(x))|\left|\frac{1}{R}-\frac{1}{|\beta(\varphi(x))|}\right|+\frac{2\rho}{R}\\
&\leq (R+2\rho)\frac{2\rho}{R(R-2\rho)}+\frac{2\rho}{R}<1
\end{split}
\]
being $10\rho<R$.
\end{proof}
 
We now define the minimax problem providing the critical level for $I_\Omega$. In the following we will assume that $\delta$ is small enough so that Lemma \ref{varphi} holds, and $\Omega$ satisfies \eqref{thin-tunnel2} for such a $\delta$.
\vskip3pt
\noindent
Let us set 
\[
\Gamma:=\{\gamma\in C^0(\mathbb{S}^{N-1}, {\cal N}_+(\Omega): \beta(\gamma(x))\neq 0 \text{ for all $x\in \mathbb{S}^{N-1}$ and ${\rm deg} (\bar\beta\circ \gamma, \mathbb{S}^{N-1})\neq 0$}\}.
\]
Observe that $\bar \beta\circ \varphi\in \Gamma$ since \eqref{degree} implies that
\[
H(t, x)=\frac{t\bar\beta(\varphi(x))+(1-t)x}{|t\bar\beta(\varphi(x))+(1-t)x|}
\]
is a homotopy between $\bar\beta\circ\varphi$ and the identity map of $\mathbb{S}^{N-1}$. Thus by the homotopy invariance of the degree we get ${\rm deg} (\bar\beta\circ \varphi, \mathbb{S}^{N-1})={\rm deg} ({\rm Id}, \mathbb{S}^{N-1})=1$. Therefore the minimax problem 
\[
c_1:=\inf_{\gamma\in \Gamma}\sup_{x\in \mathbb{S}^{N-1}}I_\Omega(\gamma(x)),
\]
is well defined. Let furthermore
\[
\bar c:=\inf\{I_\Omega(u): u\in {\cal N}_+(\Omega), \beta(u)\neq 0, \bar\beta(u)=e_N\},
\]
where $e_N=(0, \dots, 0, 1)$.

\begin{lemma}
\label{lemmapro} Let $N\geq 3$ and $s\in \ ]0,1[$ or $N=2$ and $s\in \ ]0, \frac1 2]$. For any $\bar\eps>0$, there exists $\delta(\bar\eps, N, s, R_1, R_2)>0$ such that to any bounded $\Omega\subseteq B_{R_3}$ satisfying \eqref{thin-tunnel}, \eqref{thin-tunnel2}, it holds
\[
c_\infty<\bar c\leq c_1\leq c_\infty+\bar \eps
\]
\end{lemma}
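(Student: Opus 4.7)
The plan is to establish the three inequalities $c_\infty < \bar c \leq c_1 \leq c_\infty + \bar\eps$ in order, leaving the strict bound $c_\infty < \bar c$ for last as the delicate point where the geometric hypothesis \eqref{thin-tunnel} actually enters.

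For $c_1 \leq c_\infty + \bar\eps$ I would take as competitor the map $\varphi : \mathbb{S}^{N-1} \to {\cal N}_+(\Omega)$ produced by Lemma \ref{varphi}. The energy bound \eqref{energy} gives $\sup_x I_\Omega(\varphi(x)) \leq c_\infty + \bar\eps$ directly, so the only task is to verify $\varphi \in \Gamma$. The estimate $|\bar\beta(\varphi(x)) - x| \leq 1$ in \eqref{degree} forces $\bar\beta(\varphi(x)) \cdot x \geq 1/2$ for every $x \in \mathbb{S}^{N-1}$, so the straight-line homotopy $H(t,x) = (t\bar\beta(\varphi(x)) + (1-t)x)/|t\bar\beta(\varphi(x)) + (1-t)x|$ is well defined on $[0,1] \times \mathbb{S}^{N-1}$ and joins $\bar\beta \circ \varphi$ to $\mathrm{Id}_{\mathbb{S}^{N-1}}$, giving $\deg(\bar\beta \circ \varphi, \mathbb{S}^{N-1}) = 1$. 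For $\bar c \leq c_1$, any $\gamma \in \Gamma$ has $\bar\beta \circ \gamma$ of nonzero degree, hence surjective, so there exists $x_0$ with $\bar\beta(\gamma(x_0)) = e_N$; then $\gamma(x_0)$ is admissible for $\bar c$ and $\sup_x I_\Omega(\gamma(x)) \geq I_\Omega(\gamma(x_0)) \geq \bar c$, which yields $c_1 \geq \bar c$ after passing to the infimum over $\gamma$.

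The strict lower bound $c_\infty < \bar c$ is where the contradiction argument lives. The weak inequality $c_\infty \leq \bar c$ is clear from ${\cal N}_+(\Omega) \subseteq {\cal N}_+(\R^N)$ together with \eqref{defcinfty}. To obtain strictness I would suppose $\bar c = c_\infty$ and pick $\{u_n\} \subset {\cal N}_+(\Omega)$ with $\bar\beta(u_n) = e_N$ and $I_\Omega(u_n) \to c_\infty$, then mimic the scheme of Lemma \ref{beta}. Ekeland's variational principle in ${\cal N}(\Omega)$ produces $\{v_n\} \subset {\cal N}(\Omega)$ with $[v_n - u_n]_s \to 0$, $I_\Omega(v_n) \to c_\infty$ and $\nabla_{\cal N} I_\Omega(v_n) \to 0$, which by Proposition \ref{Nehariproperties} is a Palais-Smale sequence for $I_\Omega : X_\Omega \to \R$ at level $c_\infty$. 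The profile decomposition of \cite[Theorem 1.1]{pal-pis2} then yields a weak limit $v^{(0)}$ solving \eqref{problem} in $\Omega$ and profiles $v^{(j)}$ solving \eqref{problem} in $\Omega$ or on a half-space, with $I_\Omega(v^{(0)}) + \sum_{j \geq 1} I(v^{(j)}) = c_\infty$. The budget $c_\infty < 2c_\infty$ together with \cite[Lemma 2.5]{SSS} rules out sign-changing profiles; Corollary \ref{nosol} combined with $u \mapsto -u$ rules out signed nontrivial half-space profiles; and \eqref{assumption} rules out $v^{(0)}$ being a nonzero critical point, whose level would necessarily lie in $[c_\infty, 2c_\infty]$. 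The only surviving configuration is $v^{(0)} = 0$ with a single entire bubble $\pm U_{\eps_n, z_n}$, and $\eps_n \to 0$ (otherwise $\pm U_{\eps,z} \notin X_\Omega$). The sign must be $+$, since the alternative would give $\|u_n + U_{\eps_n, z_n}\|_{2^*} \to 0$ via $\|v_n - u_n\|_{2^*} \to 0$, which is impossible because $u_n, U_{\eps_n, z_n} \geq 0$ and $\|U_{\eps_n, z_n}\|_{2^*}$ is bounded away from zero.

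At this point $\|u_n - U_{\eps_n, z_n}\|_{2^*} \to 0$, and the final task is to read off the concentration point $z = \lim z_n$ (along a subsequence) and derive a geometric contradiction with \eqref{thin-tunnel}. Combining the $L^{2^*}$ closeness with the pointwise bound $||a|^{2^*} - |b|^{2^*}| \leq C(|a|^{2^*-1} + |b|^{2^*-1})|a-b|$ and H\"older gives $u_n^{2^*} - U_{\eps_n, z_n}^{2^*} \to 0$ in $L^1(\R^N)$. Since $u_n$ vanishes on $B_{R_0}$ and outside $\bar\Omega$ while $U_{\eps_n, z_n}^{2^*} \weakto \tfrac{N}{s} c_\infty \delta_z$ in the sense of measures as $\eps_n \to 0$, support considerations force $z \in \bar\Omega$ and $|z| \geq R_0$. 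Integrating the $L^1$ convergence against $x \mapsto x 1_{B_{R_3}}$ then yields $\beta(u_n) \to z$, hence $\bar\beta(u_n) \to z/|z|$; the standing constraint $\bar\beta(u_n) \equiv e_N$ pins $z = |z| e_N$, so $z \in \bar\Omega \cap \{(0,x_N) : x_N \geq R_0\}$, which is empty by \eqref{thin-tunnel}. This is the contradiction sought. I expect the hardest step of the plan to be exactly this final localization: transferring the profile decomposition information from the Ekeland sequence $v_n$ back to the positive sequence $u_n$, and in particular showing rigorously that the concentration point $z$ must sit in $\bar\Omega$ on the positive $x_N$-axis, rather than escaping to the boundary of $B_{R_3}$ or to the origin.
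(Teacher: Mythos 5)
Your proposal is correct and follows essentially the same route as the paper: the upper bound via the competitor $\varphi$ of Lemma \ref{varphi} and the homotopy/degree argument, the bound $\bar c\leq c_1$ via surjectivity of $\bar\beta\circ\gamma$, and the strict inequality $c_\infty<\bar c$ by the Ekeland-plus-profile-decomposition contradiction that the paper itself only sketches as ``analogous to the proof of Lemma \ref{beta}''. Your detailed localization of the concentration point on the positive $x_N$-axis, contradicting \eqref{thin-tunnel}, is exactly the intended argument.
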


\begin{proof}
We fix $\delta>0$ so that Lemma \eqref{varphi} holds for $\bar \eps$, providing the corresponding $\varphi$. First observe that since ${\rm deg} (\bar\beta\circ \varphi, \mathbb{S}^{N-1})=1\neq 0$ there is $x\in \mathbb{S}^{N-1}$ such that $\bar\beta(\varphi(x))=e_N$. Therefore $\bar c$ is well defined as well. By the same reason, given any $\gamma\in \Gamma$, there exists $x_\gamma$ such that $\bar\beta(\gamma(x_\gamma))=e_N$, so that 
\[
\bar c\leq\inf_{\gamma\in \Gamma} I_\Omega(\gamma(x_\gamma))\leq c_1.
\]
Since, as noted before, $\varphi\in \Gamma$, we have through \eqref{energy}
\[
c_1\leq \sup_{x\in \mathbb{S}^{N-1}}I_\Omega(\varphi(x))\leq c_\infty+\bar\eps.
\]
The argument which shows that $c_\infty<\bar c$ relies on \eqref{thin-tunnel} and is analogous to the proof of Lemma \ref{beta}.
\end{proof} 

\begin{theorem}
Let $N\geq 3$ and $s\in \ ]0,1[$ or $N=2$ and $s\in \ ]0,\frac 1 2]$. Then there exists $\delta>0$ such that if $\Omega\subseteq B_{R_3}\setminus B_{R_0}$ is a smooth open set satisfying \eqref{thin-tunnel}, \eqref{thin-tunnel2}, $I_\Omega$ has a critical point in $X_\Omega$ at some level $c\in \ ]c_\infty, 2c_\infty[$.
\end{theorem}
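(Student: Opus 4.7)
The plan is to argue by contradiction. Suppose $I_\Omega$ has no critical point in $X_\Omega$ at any level $c\in\ ]c_\infty, 2c_\infty[$; by Proposition \ref{Nehariproperties} (2) this is equivalent to $I_\Omega:{\cal N}(\Omega)\to \R$ having no critical value in $\ ]c_\infty, 2c_\infty[$. The strategy is to pin the contradiction on the minimax value $c_1$ provided by Lemma \ref{lemmapro}, by constructing a deformation that strictly decreases the sup-energy along the minimax class $\Gamma$ while preserving the degree condition; the latter will be the main obstacle.

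To set things up, fix $\bar\eps>0$ so small that $\bar\eps<\min\{c_\infty, \eps_0\}$, where $\eps_0$ is the constant from Lemma \ref{beta}, and apply Lemma \ref{lemmapro} with this $\bar\eps$ to obtain the corresponding $\delta>0$ (depending only on $\bar\eps, N, s, R_1, R_2$), together with the chain of inequalities
\[
c_\infty<\bar c\leq c_1\leq c_\infty+\bar\eps<2c_\infty.
\]
Pick $\eta>0$ small enough that $[c_1-\eta, c_1+\eta]\subset\ ]c_\infty, 2c_\infty[$ and $c_1+\eta<c_\infty+\eps_0$. Under the contradiction hypothesis, $C_{I_\Omega}\cap [c_1-\eta, c_1+\eta]=\emptyset$ on ${\cal N}(\Omega)$, while Proposition \ref{PalaisSmale} (2) delivers $(PS)_c$ on ${\cal N}(\Omega)$ for every $c$ in that range. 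Proposition \ref{deformation} then produces a strong deformation retraction ${\cal R}:[0,1]\times\{I_\Omega\leq c_1+\eta\}\to\{I_\Omega\leq c_1+\eta\}$ of the sublevel $\{I_\Omega\leq c_1+\eta\}$ onto $\{I_\Omega\leq c_1-\eta\}$ inside ${\cal N}(\Omega)$.

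Using the definition of $c_1$ as an infimum, one picks $\gamma\in\Gamma$ with $\sup_{x\in \mathbb{S}^{N-1}}I_\Omega(\gamma(x))\leq c_1+\eta$ and sets $\tilde\gamma(x):={\cal R}(1, \gamma(x))$, so that $\sup_x I_\Omega(\tilde\gamma(x))\leq c_1-\eta<c_1$. The delicate step is to check that $\tilde\gamma\in \Gamma$, which amounts to verifying that $\beta({\cal R}(t, \gamma(x)))$ never vanishes as $(t, x)$ ranges over $[0,1]\times \mathbb{S}^{N-1}$; this is precisely where Lemma \ref{beta} enters. Since ${\cal R}(t, \gamma(x))\in\{I_\Omega\leq c_1+\eta\}\subseteq\{I_\Omega\leq c_\infty+\eps_0\}$, one has $|\beta({\cal R}(t, \gamma(x)))|\geq R_0/2>0$ uniformly, hence $(t, x)\mapsto \bar\beta({\cal R}(t, \gamma(x)))$ is a continuous $\mathbb{S}^{N-1}$-valued homotopy between $\bar\beta\circ\gamma$ and $\bar\beta\circ\tilde\gamma$. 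By homotopy invariance of the Brouwer degree,
\[
{\rm deg}(\bar\beta\circ\tilde\gamma, \mathbb{S}^{N-1})={\rm deg}(\bar\beta\circ\gamma, \mathbb{S}^{N-1})\neq 0,
\]
so $\tilde\gamma\in\Gamma$, contradicting the minimality of $c_1$. This contradiction furnishes the desired critical point of $I_\Omega$ in $X_\Omega$ at some level in $\ ]c_\infty, 2c_\infty[$.
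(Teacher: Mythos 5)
Your overall strategy is the same as the paper's (contradiction, deformation of a near-optimal path in $\Gamma$, Lemma \ref{beta} plus homotopy invariance of the degree), but there is one genuine gap at the decisive step: you cannot conclude that $\tilde\gamma:={\cal R}(1,\gamma(\cdot))$ belongs to $\Gamma$. The class $\Gamma$ consists of maps with values in ${\cal N}_+(\Omega)$, i.e.\ \emph{nonnegative} functions on the Nehari manifold, whereas the deformation ${\cal R}$ furnished by Proposition \ref{deformation} is only a flow on the full manifold ${\cal N}(\Omega)$ (this is forced on you, since the $(PS)$ condition of Proposition \ref{PalaisSmale} (2) is stated on ${\cal N}(\Omega)$), and nothing guarantees that it preserves the cone $\{u\geq 0\}$. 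Your verification only addresses the barycenter/degree condition, not the membership $\tilde\gamma(x)\in{\cal N}_+(\Omega)$; without it the inequality $c_1\leq\sup_x I_\Omega(\tilde\gamma(x))$ has no justification, and the contradiction collapses. (Redefining $\Gamma$ over ${\cal N}(\Omega)$ would not rescue the argument as written, because the lower bound $c_\infty<\bar c\leq c_1$ of Lemma \ref{lemmapro} is established for nonnegative competitors.)

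The paper repairs exactly this point: instead of ${\cal R}(1,\gamma)$ it uses $\bar\gamma_1:={\cal T}\big(|{\cal R}(1,\gamma)|\big)$, which does land in ${\cal N}_+(\Omega)$. The inequality \eqref{proptau} guarantees $I_\Omega(\bar\gamma_1(x))\leq I_\Omega\big({\cal R}(1,\gamma(x))\big)$, so the energy bound below $c_1$ survives the correction, and the homotopy $H(t,x)=\bar\beta\big({\cal T}(|{\cal R}(t,\gamma(x))|)\big)$ is well defined (again by \eqref{proptau} together with Lemma \ref{beta}, which applies to possibly sign-changing elements of ${\cal N}(\Omega)$ with energy below $c_\infty+\eps_0$), so the degree is still nonzero and $\bar\gamma_1\in\Gamma$. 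Apart from this missing composition with ${\cal T}(|\cdot|)$ — and the corresponding continuity/energy checks — your choice of levels, the use of the $(PS)$ condition in $]c_\infty,2c_\infty[$, and the degree argument match the paper's proof; incidentally, your contradiction hypothesis (no critical point of $I_\Omega$ on $X_\Omega$ in $]c_\infty,2c_\infty[$, transferred to ${\cal N}(\Omega)$ via Proposition \ref{Nehariproperties}) is the correct one for invoking Proposition \ref{deformation}.
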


\begin{proof}
Let $\eps_0=\eps(N, s, R_0, R_3)\in \ ]0, c_\infty[$ be given in Lemma \ref{beta} and let $\delta>0$ be such that Lemma \ref{varphi} and Lemma \ref{lemmapro} hold for $\bar\eps=\eps_0/2$. Finally choose $a, b\in \ ]c_\infty, 2c_\infty[$ such that
\[
c_\infty<a<\bar c\leq c_1<b<c_\infty+\eps_0<2c_\infty.
\]
 By Proposition \ref{PalaisSmale}, {\em (2)}, $I_\Omega$ satisfies $(PS)_c$ on ${\cal N}(\Omega)$ for all $c\in [a, b]$. Applying Proposition \ref{deformation}, fix a homotopy retraction ${\cal R}$ of $\{u\in {\cal N}(\Omega):I_\Omega(u)\leq b\}$ on $\{u\in {\cal N}(\Omega):I_\Omega(u)\leq a\}$ and pick $\bar\gamma\in \Gamma$ such that 
\[
\sup_{x\in \mathbb{S}^{N-1}}I_\Omega(\bar \gamma(x))\leq b.
\]
We claim that 
\[
\bar \gamma_1:= {\cal T}(|{\cal R}(1, \bar\gamma)|)\in \Gamma.
\]
Indeed, for any $x\in \mathbb{S}^{N-1}$, $\bar \gamma_1(x)\in {\cal N}_+(\Omega)$ and \eqref{proptau} ensures 
\[
I(\bar\gamma_1(x))\leq I\big({\cal R}(1,\bar \gamma(x))\big)\leq a<c_\infty+\eps_0.
\]
 By Lemma \ref{beta} it holds $\beta(\bar\gamma_1(x))\neq 0$ for all $x\in \mathbb{S}^{N-1}$, while we claim 
 \[
 (x, t)\mapsto H(t, x):=\bar\beta\big({\cal T}\big(|{\cal R}(t, \bar\gamma(x))|\big)\big)
 \]
 defines a homotopy in $\mathbb{S}^{N-1}$ between $\bar\beta\circ\bar\gamma$ and $\bar\beta\circ  \bar\gamma_1$. Indeed using \eqref{proptau}, $I({\cal R}(t, x))\leq c_\infty+\eps_0$ and Lemma \ref{beta} we obtain that $\beta\big({\cal T}\big(|\cal{R}(t, \bar\gamma(x))|\big)\big)\neq 0$ for all $(t, x)\in [0,1]\times \mathbb{S}^{N-1}$, and thus $H$ is continuous. This ensures that ${\rm deg}(\bar\beta\circ \bar\gamma_1, \mathbb{S}^{N-1})={\rm deg}(\bar\beta\circ \bar\gamma, \mathbb{S}^{N-1})\neq 0$. We thus reached a contradiction, being 
\[
c_1=\inf_{\gamma\in \Gamma}\sup_{x\in \mathbb{S}^{N-1}}I_\Omega(\gamma(x))\leq \sup_{x\in \mathbb{S}^{N-1}}I_\Omega(\bar \gamma_1(x))\leq a< c_1.
\]
This concludes the proof.
\end{proof}

\bigskip
\bigskip

\end{document}